\documentclass[a4paper,11pt,reqno,noindent]{amsart}
\usepackage[centertags]{amsmath}
\usepackage{amsfonts,amssymb,amsthm,dsfont,cases,amscd,esint,enumerate, stmaryrd}
\usepackage{upgreek}
\usepackage{mathrsfs}
\usepackage[T1]{fontenc}
\usepackage[english]{babel}
\usepackage[applemac]{inputenc}
\usepackage{newlfont}
\usepackage{color}
\usepackage[body={17cm,21.5cm}, top = 1in, bottom = 1.5in, centering]{geometry} 
\usepackage{fancyhdr}
\usepackage{enumerate}
\usepackage{comment}

\usepackage{mathtools}
\usepackage[scr=boondoxo]{mathalfa}

\theoremstyle{plain}
\newtheorem{theor}{Theorem}[section]
\newtheorem{lem}[theor]{Lemma}
\newtheorem{prop}[theor]{Proposition}

\theoremstyle{definition}

\newtheorem{rem}[theor]{Remark}

\mathchardef\emptyset="001F
\numberwithin{equation}{section}

\newcommand{\R}{\mathbb R}

\newcommand{\Z}{\mathbb Z}

\newcommand{\T}{\mathbb T}

\newcommand{\Pc}{\mathcal{P}}
\newcommand{\Pp}{\mathbb{P}}

\newcommand{\Id}{\operatorname{Id}}

\newcommand{\E}{\mathbb{E}}
\newcommand{\supess}{\operatorname{supess}}

\newcommand{\Var}{\operatorname{Var}}

\newcommand{\loc}{{\operatorname{loc}}}

\newcommand{\Cov}{{\operatorname{Cov}}}

\newcommand{\step}[1]{\noindent \textit{Step} #1.}

\let\le\leqslant
\let\leq\leqslant
\let\ge\geqslant
\let\geq\geqslant

\usepackage[colorlinks,citecolor=black,urlcolor=black]{hyperref}
\usepackage{tikz}
\usepackage{graphicx}
\usepackage{pgfplots}
\usepgfplotslibrary{groupplots}
\pgfplotsset{
paperplot/.style={
grid=both,
minor grid style={gray!15},
major grid style={gray!25},
tick align=outside,
label style={font=\footnotesize},
tick label style={font=\scriptsize},
title style={font=\footnotesize},
},
scorecurve/.style={thick, blue},
minmark/.style={
only marks,
mark=o,
mark size=2.2pt,
mark options={draw=blue, fill=white, line width=0.9pt},
},
maxmark/.style={
only marks,
mark=triangle,
mark size=2.6pt,
mark options={draw=blue, fill=white, line width=0.9pt},
},
samplemark/.style={
only marks,
mark=o,
mark size=2.2pt,
mark options={draw=blue, fill=white, line width=0.9pt},
},
}
\usetikzlibrary{fit}
\usetikzlibrary{shapes.geometric}
\usetikzlibrary{calc}

\title[Lost in the middle]{Kinetic theory for Transformers\\and the lost-in-the-middle phenomenon}

\author[M.~Duerinckx]{Mitia Duerinckx}
\author[B.~Geshkovski]{Borjan Geshkovski}
\author[S.~Rossi]{Stefano Rossi}

\begin{document}

\begin{abstract}
We study causal self-attention dynamics---a toy model for decoder Transformers---which we interpret as a non-exchangeable interacting particle system.
Adapting cumulant expansions to the triangular causal dependency structure of the model, and appealing to non-hierarchical methods to estimate correlations using Glauber calculus,
we prove a quantitative mean-field limit result and a next-order characterization of
correlations.
For iid uniformly distributed tokens,
the limiting correlation equation
can be solved in closed form and
we obtain a rigorous explanation of the empirically observed
\emph{lost-in-the-middle} phenomenon: the token retrieval profile, as a function of the
source position in the prompt, is $\mathsf{U}$-shaped,
with primacy, recency, and a unique interior minimum under an explicit
smallness condition.
\end{abstract}

\maketitle
\setcounter{tocdepth}{1}
\allowdisplaybreaks

%
\section{Introduction and main results}\label{sec:intro}

Large language models display several long-context effects, one of the
cleanest being the dependence of the output on the position of relevant information
inside the prompt. A particularly simple protocol consists in inserting a single
relevant fact at a chosen location, filling the rest of the prompt with
distractors, and then asking a retrieval or question-answering task whose correct
answer depends only on that fact. Repeating this experiment across many prompts and across a wide class of models reveals a consistent pattern: accuracy
is high near the beginning of the prompt, high again near the end, and lower in
the middle. This empirically observed pattern goes by the name of \emph{lost-in-the-middle} \cite{liu2023lost}; see  Figure~\ref{fig:litm-paper}.

\begin{figure}[h]
\centering
\begin{minipage}[c]{.30\textwidth}
\centering
\includegraphics[width=\linewidth]{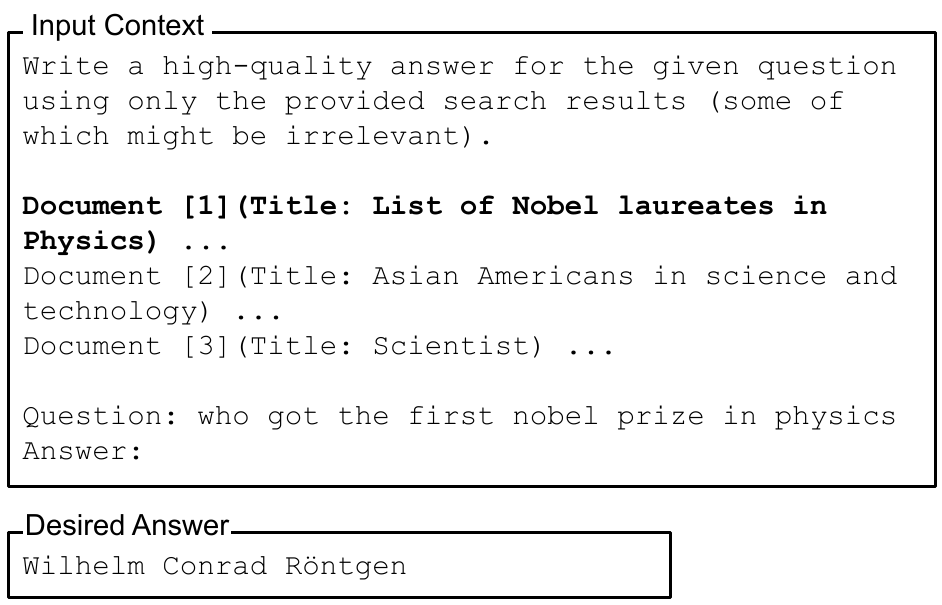}\\[-.35em]
\includegraphics[width=\linewidth]{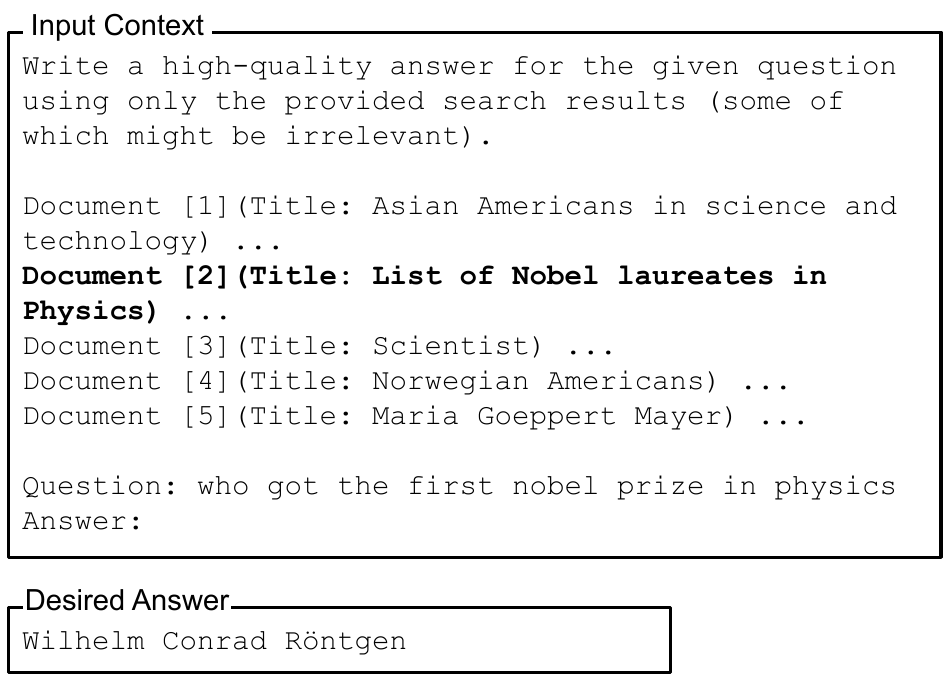}
\end{minipage}
\hspace{.04\textwidth}
\begin{minipage}[c]{.36\textwidth}
\centering
\includegraphics[width=\linewidth]{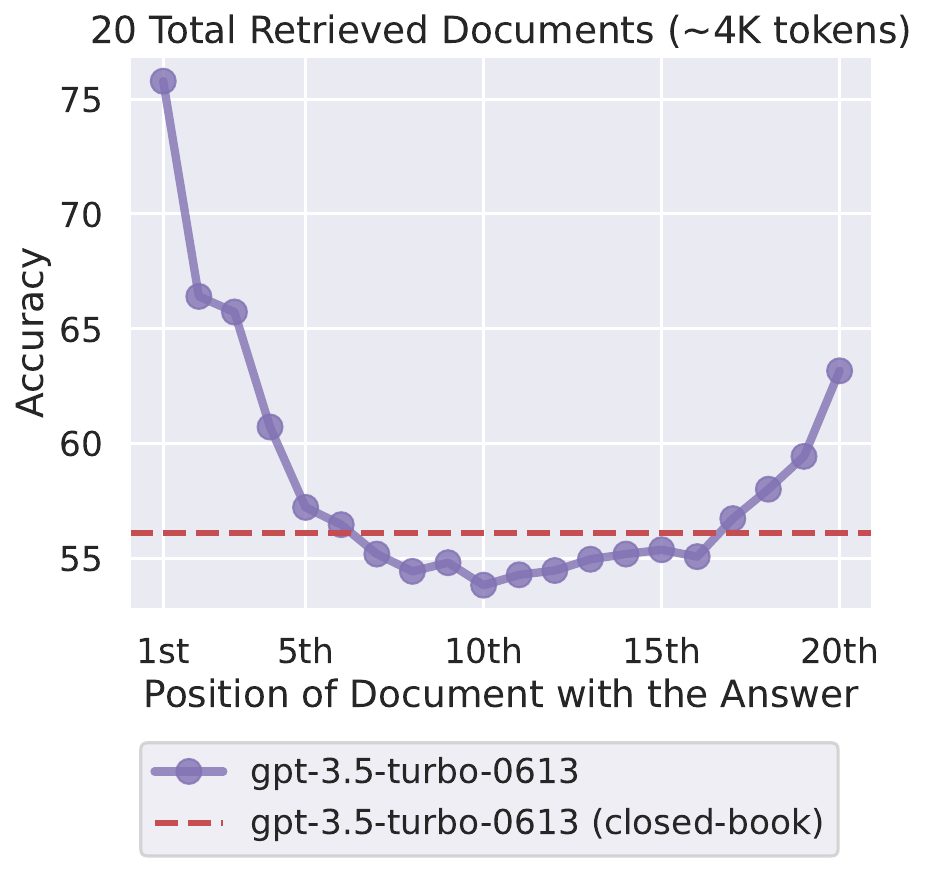}
\end{minipage}
\caption{The experiment in~\cite{liu2023lost} that motivates the paper. The two panels on the left show how the answer-containing document is moved through the prompt and how the amount of surrounding text is changed. The curve on the right is the empirical lost-in-the-middle profile: retrieval is strongest near the beginning and the end of the context, and weakest in the middle. Reproduced from Figures~3, 4, and~1 of~\cite{liu2023lost}, respectively, with permission from the authors.}
\label{fig:litm-paper}
\end{figure}

A convenient framework for addressing this question from a mathematical lens is to view Transformers \cite{vaswani}---the neural network architecture underlying large language models---as interacting particle
systems~\cite{geshkovski2025mathematical}. This has already proved fruitful
for encoder Transformers, which are exchangeable particle systems, where empirically observed representation collapse phenomena \cite{zhou2021deepvit, dong2021attention, noci2022signal, queipo-de-llano2026attention} can be rigorously proven and seen as clustering of the particles over time~\cite{GLPR23, geshkovski2025mathematical, chen2025quantitative, polyanskiy2025synchronization, geshkovski2024dynamic, BrunoPasqualottoAgazzi2024}. Results of this nature have since extended to substantially more general settings~\cite{bruno2025a, 
KoubbiGeshkovskiRigollet2026, FedorovSanderElieMarionLauriere2026, karagodin2024clustering, alvarez2026perceptrons, geshkovski2024measure, castin2025unified}. 

We leverage this perspective for decoder
Transformers, for which less is known. More precisely,
we consider a minimal model on the one-dimensional torus $\T:=\R/2\uppi\Z$, structurally faithful to the ones used in the experiments of~\cite{liu2023lost}.
Namely, at layer $t\geq0$, the $j$-th token embedding, for $ 1\le j\le N$, evolves as
\begin{equation}\label{eq:gpt-dyn-alibi}
\frac{\mathrm d}{\mathrm dt}\uptheta_j(t)
=
\frac{1}{\mathscr{Z}_{N,j}}
\sum_{k=1}^{j-1}e^{-\frac{\uplambda}{N}(j-k)}\mathsf{w}_{\upbeta}'\left(\uptheta_j(t)-\uptheta_k(t)\right),
\end{equation}
with normalization
\begin{equation}\label{eq:partition-function}
\mathscr{Z}_{N,j}:=\sum_{k=1}^{j-1}e^{-\frac{\uplambda}{N}(j-k)},
\end{equation}
and interaction kernel
\begin{equation}\label{eq:wbeta}
\mathsf{w}_{\upbeta}(\uptheta):=e^{\upbeta\cos\uptheta},
\end{equation}
for fixed $\upbeta>0$ and $\uplambda\in\R$.
This corresponds to the ``USA'' dynamics of~\cite{GLPR23}, but in a causal
form, as in decoder architectures, and it incorporates the positional encoding factor
$e^{-\uplambda(j-k)/N}$ known as ALiBi~\cite{Press2021ALiBi}.
In particular, causality makes the system non-exchangeable. We focus on this
intentionally caricatural setting for clarity and simplicity: it is not meant to represent full Transformers, which are high-dimensional and involve trainable matrices, but models of this type have been remarkably predictive of the behavior of trained  Transformers, as illustrated for instance in~\cite[Figure~1]{geshkovski2025mathematical}, and they are rigorously justified at the initialization of training~\cite{KoubbiGeshkovskiRigollet2026,BrunoPasqualottoAgazzi2024,FedorovSanderElieMarionLauriere2026}. The reduced system~\eqref{eq:gpt-dyn-alibi} already retains the two mechanisms central to the present study: early tokens are repeatedly reused by later ones, while recent tokens are favored by the positional bias. Our analysis also extends to general smooth interaction kernels and causal positional encodings, see Section~\ref{sssec:positional-encodings}.

To connect the dynamics with plots such as Figure~\ref{fig:litm-paper}, we use
a minimal decoder, which is a stripped-down version of the retrieval task in \cite{liu2023lost}: one singles out a relevant source position and asks
whether the final output token recovers it. Fix a vocabulary size $M\ge2$, say $\mathscr V=\{0,1,\ldots,M-1\}$, and let
\[
\upvartheta_m:=\frac{2\uppi m}{M}\in\T,\qquad m\in\mathscr V.
\]
We encode an input token $m_i$ at position $i$ by $\uptheta_i(0)=\upvartheta_{m_i}$.
Given the last hidden state $\uptheta_N(t)$, the decoder returns the nearest codeword
\[
\hat m_N(t)
:=\arg\min_{m\in\mathscr V} \left|\uptheta_N(t)-\upvartheta_m\right|_{\T}.
\]
Since one should think of many prompts, or equivalently of many possible initial conditions, the relevant observable is an averaged retrieval score over a statistical ensemble of prompts.
For a distinguished source position $1\le i_*=\lfloor \upsigma_0N\rfloor\le N$, with
$\upsigma_0\in(0,1)$, we therefore define the prediction accuracy by
\begin{equation}\label{eq:Acc-def}
\Pp\left[\hat m_N(t)=m_{i_*}\right]
=
\E\left[
\mathds1_{\left\{\left|\uptheta_N(t)-\uptheta_{i_*}(0)\right|_{\T}\le \frac{\uppi}{M}\right\}}
\right],
\end{equation}
where the expectation is taken with respect to the random ensemble of prompts $\{\theta_i(0)\}_i$.
For analytical convenience, we mollify the characteristic function,
e.g.\@ using a
periodic Gaussian,
thus defining
the
{\it soft accuracy}
\begin{equation}\label{eq:Acc-soft}
\mathscr{A}_N(t,\upsigma_0)
:=
\E
\sum_{k\in\Z}
\exp\left(
-\frac{M^2}{2\uppi^2}
\left(\uptheta_N(t)-\uptheta_{i_*}(0)-2\uppi k\right)^2
\right).
\end{equation}
By the Poisson summation formula, this admits the Fourier representation
\begin{equation}\label{eq:Acc-soft-Fourier}
\mathscr{A}_N(t,\upsigma_0)
=
\frac{\sqrt{\uppi/2}}{M}
\sum_{n\in\Z}
e^{-\frac{\uppi^2}{2M^2}n^2}
\E \left[e^{in(\uptheta_N(t)-\uptheta_{i_*}(0))}\right].
\end{equation}
Our goal is to prove that this quantity satisfies the $\mathsf{U}$-shape observed in~\cite{liu2023lost},
see Figure~\ref{fig:litm-paper}.
This amounts to analyzing the large-$N$ behavior of the expectations $\E[e^{in(\uptheta_N(t)-\uptheta_{i_*}(0))}]$ as the source location $i_*$ varies.

We approach this problem using tools from kinetic theory.
A well-developed framework exists for deriving continuum descriptions from interacting particle systems: in the mean-field regime, this originates in propagation of chaos and the associated McKean--Vlasov limits (see e.g.\@ the reviews~\cite{Golse2016-rev,Jabin2017-rev}, as well as recent advances for singular interactions, e.g.~\cite{Serfaty-20,RS-23,BJW-23,
BJS-22,BDJ}).
More recently, the theory has been extended, in combination with graph limit techniques, to non-exchangeable systems (see e.g.~\cite{JabinPoyatoSoler2025}).
The model~\eqref{eq:gpt-dyn-alibi} considered here, however, falls outside this setting: the interaction weights do not satisfy the key boundedness requirement of~\cite[Assumption~(3)]{JabinPoyatoSoler2025}, but instead obey
\begin{equation}\label{eq:min-requ-jabinsoler-0}
\sup_{1\le k\le N}\sum_{j=1}^N\upomega_{j,k}\simeq\log N,
\end{equation}
reflecting the singular cumulative influence of small indices on later particles.

Beyond this structural issue, the retrieval observable~\eqref{eq:Acc-soft-Fourier} exhibits a more fundamental feature: its spatial dependence does not appear at the mean-field level. Capturing it therefore requires going beyond propagation of chaos and retaining the leading-order time correlations, which is the main objective of this work.
For exchangeable systems, the analysis of such correlations has seen significant recent progress, with both trajectorial approaches~\cite{MD-21} and hierarchy-based methods~\cite{PPS-19,Hess_Childs_2023,DJ_25}. By contrast, results for genuinely non-exchangeable systems remain scarce.
In the present setting~\eqref{eq:gpt-dyn-alibi}, we adapt the convenient trajectorial approach of~\cite{MD-21}, based on Glauber calculus with respect to initial data. A key feature of our analysis is that the singular scaling~\eqref{eq:min-requ-jabinsoler-0} induces nonstandard corrections to the propagation of chaos.

\subsection{Main results}

We define the empirical measure
\begin{equation}\label{eq:emp-meas}
\upmu_N(t,\upsigma,\uptheta)=\frac1N\sum_{j=1}^N\updelta_{(\frac jN,\uptheta_j(t))}(\upsigma,\uptheta)\in L^\infty(\mathbb{R}_{\geqslant0};\Pc((0,1)\times\T)),
\end{equation}
where the rescaled index $\upsigma=\frac jN\in(0,1]$ plays the role of a continuous reading-cursor variable in the limit. Next, motivated by the positional encoding factor $\frac1{\mathscr{Z}_{N,j}}e^{-\uplambda(j-k)/N}\mathds1_{k<j}$ in~\eqref{eq:gpt-dyn-alibi}, we introduce the associated limiting directed graphon
\begin{equation}\label{eq:kernel-Klambda}
\mathsf{k}_{\uplambda}(\upsigma,\upsigma')
:=
\frac{\uplambda e^{-\uplambda(\upsigma-\upsigma')}}{1-e^{-\uplambda\upsigma}}\,\mathds1_{\upsigma'<\upsigma},
\end{equation}
which is understood by continuity as $\upsigma^{-1}\mathds1_{\upsigma'<\upsigma}$
when $\uplambda=0$.
Finally, denote by $\hat f(n):=\int_\T e^{-in\uptheta}f(\uptheta)\,\mathrm d\uptheta$ the Fourier transform on $\T$, for $n\in\Z$, and write $\langle n\rangle:=\sqrt{1+n^2}$.

Our first result describes the mean-field limit of the system as $N\to\infty$.
We stress that even the qualitative convergence in part~(i) does not follow from existing mean-field results for non-exchangeable systems, since the interaction weights in~\eqref{eq:gpt-dyn-alibi} fail to satisfy the boundedness assumption of~\cite[Assumption~(3)]{JabinPoyatoSoler2025}; see~\eqref{eq:min-requ-jabinsoler-0}.

\begin{theor} \label{th:mf-gpt}\
\begin{enumerate}[(i)]
\item\emph{Qualitative mean-field limit:}\\
If initially $\upmu_N|_{t=0}\overset*\rightharpoonup f_\circ$ in $\Pc((0,1]\times\T)$, then we have
\[\upmu_N(t)\overset*\rightharpoonup f(t)\qquad\text{for all $t\ge0$,}\]
where the limit~$f$ is the unique weak solution in $L^\infty(\mathbb{R}_{\geqslant0};\Pc((0,1]\times\T))$ of the kinetic equation
\begin{equation}\label{eq:mfl-lambda}
\left\{\begin{array}{l}\displaystyle
\partial_t f(t,\upsigma,\uptheta)
=-\partial_\uptheta\left(
 f(t,\upsigma,\uptheta)\int_0^\upsigma \mathsf{k}_{\uplambda}(\upsigma,\upsigma')(\mathsf{w}_{\upbeta}'\ast_\uptheta f)(t,\upsigma',\uptheta)\,\mathrm d\upsigma'
\right),\\
f|_{t=0}=f_\circ.
\end{array}\right.
\end{equation}
\item\emph{Error estimates:}\\
Assume that the initial data $(\uptheta_j^0)_{1\le j\le N}$ are independent and that their distribution converges to some limit profile $f_\circ\in C^0([0,1];\Pc(\T))$ in the following sense: for some $\updelta>0$ and~$C,\upgamma<\infty$,
\begin{equation}\label{eq:init-conv}
\left|\E\left[e^{in\uptheta^0_{\lceil N\upsigma\rceil}}\right]-\hat f_\circ\left(\upsigma,n\right)\right|\le CN^{-\updelta}\langle n\rangle^\upgamma\quad\text{for all $\upsigma\in(0,1]$ and $n\in \Z$}.
\end{equation}
Then, for all $t\ge0$, $\upvarphi\in C^\infty([0,1]\times\T)$, and $m>\upgamma\vee2+\frac12$,
\begin{equation}\label{eq:conv-rate-muNf}
\E\left[\bigg|\int_{(0,1]\times\T}\upvarphi\left(\upmu_N(t)-f(t)\right)\bigg|^2\right]^\frac12\lesssim N^{-\updelta\wedge\frac12}e^{C t}\|\upvarphi\|_{L^\infty([0,1];W^{m,\infty}(\T))}.
\end{equation}
\end{enumerate}
\end{theor}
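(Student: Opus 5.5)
The plan is to exploit the triangular (causal) structure: particle $j$ is driven only by particles $k<j$. For part~(i), I compare $\upmu_N$ with the limit through a stability estimate for~\eqref{eq:mfl-lambda} in a weak (dual Lipschitz / bounded-Lipschitz) topology in $\uptheta$, uniform in $\upsigma$. The first step is well-posedness of~\eqref{eq:mfl-lambda}. For each $\upsigma$, the velocity field
\[
V[f](t,\upsigma,\uptheta):=\int_0^\upsigma \mathsf k_\uplambda(\upsigma,\upsigma')(\mathsf w_\upbeta'\ast f)(t,\upsigma',\uptheta)\,\mathrm d\upsigma'
\]
is smooth and bounded in $\uptheta$, because $\int_0^\upsigma \mathsf k_\uplambda(\upsigma,\upsigma')\,\mathrm d\upsigma'=1$ and $\mathsf w_\upbeta\in C^\infty$. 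Since $\upsigma$ is only a parameter, I solve the equation by characteristics with a Picard iteration in $C([0,T];L^\infty_\upsigma\mathcal P(\T))$ under the $W_1$ metric. Uniqueness of weak solutions then follows from the Dobrushin-type estimate
\[
\sup_\upsigma W_1(f_1,f_2)(t)\le e^{Ct}\sup_\upsigma W_1(f_1,f_2)(0).
\]
This estimate holds because $\|V[f_1]-V[f_2]\|_{W^{1,\infty}}\lesssim \sup_{\upsigma'} W_1(f_1,f_2)(\upsigma')$, again using that the kernel has unit mass.

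The empirical measure is not uniformly close in $\upsigma$, since a single $\upsigma$-slice carries only one particle. So for the qualitative limit I instead work at the discrete level. I compare the particle dynamics with the characteristic flow $\Phi_t[f](\upsigma,\cdot)$ of the limit, evaluated at the discrete points $\upsigma=j/N$ and started at the same $\uptheta_j^0$. The discrete weights $\mathscr Z_{N,j}^{-1}e^{-\uplambda(j-k)/N}$ are a Riemann approximation of $\mathsf k_\uplambda(j/N,\cdot)$. The error is of relative size $O(1/j)$, which is summable against the averaging $\frac1N\sum_j$ only up to $\frac{\log N}N$; this is exactly the mechanism behind~\eqref{eq:min-requ-jabinsoler-0}. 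Weak-$*$ convergence of the initial data combined with this Riemann-sum consistency and a Grönwall argument gives (i). To handle the singular $1/\upsigma$ behaviour near $0$, I separate the first $\varepsilon N$ particles: their mass contribution is $\le\varepsilon$, and for $j\ge\varepsilon N$ the weights are uniformly bounded by $C/(\varepsilon N)$.

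For (ii) I would use a synchronous-coupling argument in Fourier variables. Let $\bar\uptheta_j(t):=\Phi_t(j/N,\uptheta_j^0)$ be independent copies following the limiting flow, and set $\bar\upmu_N$ to be their empirical measure. I then split
\[
\int\upvarphi(\upmu_N-f)=\int\upvarphi(\upmu_N-\bar\upmu_N)+\int\upvarphi(\bar\upmu_N-\E\bar\upmu_N)+\int\upvarphi(\E\bar\upmu_N-f).
\]
The middle term is a sum of independent centred variables, hence $O(N^{-1/2}\|\upvarphi\|_\infty)$ in $L^2$. The last term is controlled by~\eqref{eq:init-conv} together with the Riemann-sum error. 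Transporting $e^{in\uptheta}$ through the smooth flow costs a loss $\langle n\rangle^{\upgamma}$ times derivatives of $\Phi$; summing over Fourier modes against $\hat\upvarphi$ requires $m>\upgamma\vee 2+\frac12$ by Cauchy--Schwarz, giving $N^{-\updelta}$.

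For the first term, I write $\mathrm d_t(\uptheta_j-\bar\uptheta_j)$ as the sum of three contributions:
\begin{itemize}
\item a Lipschitz term bounded by the weighted average $\sum_k \upomega_{jk}|\uptheta_k-\bar\uptheta_k|$;
\item a fluctuation term $\sum_{k<j}\upomega_{jk}\bigl(\mathsf w'_\upbeta(\bar\uptheta_j-\bar\uptheta_k)-\E_k[\cdot]\bigr)$;
\item a discretization term.
\end{itemize}
The fluctuation term has conditional-variance bound $\sum_k\upomega_{jk}^2\lesssim 1/j$. Averaging $\frac1N\sum_j\E|\cdot|^2$ then gives $\frac{\log N}{N}$ in the crude version. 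Gaining the sharp $N^{-1/2}$ is, I expect, the main obstacle. To get it, I would not average the squared particle errors directly, but estimate $\frac1N\sum_j\upvarphi(\uptheta_j)-\upvarphi(\bar\uptheta_j)$ linearly. This is the Glauber-calculus route of~\cite{MD-21}: differentiate with respect to each initial datum $\uptheta_l^0$ and use the Poincaré/variance inequality
\[
\Var F\le\sum_l\E|D_lF|^2 .
\]
The required decay of $D_l\uptheta_j$ comes from a causal Grönwall bound: $|D_l\uptheta_j|\lesssim e^{Ct}\upomega$-weighted, with total influence $\sum_j\frac1N|D_l\uptheta_j|\lesssim\frac{1}{N}\log\frac Nl$. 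Squaring and summing over $l$ gives $\frac1{N^2}\sum_l\log^2(N/l)\lesssim\frac1N$, since $\log^2(1/\upsigma)$ is integrable on $(0,1)$. This yields the $N^{-1/2}$ rate, and combining it with the bias estimate above gives~\eqref{eq:conv-rate-muNf}.
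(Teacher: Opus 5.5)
Your approach differs from the paper's, and it is sound in outline. The paper never couples trajectories.
\begin{itemize}
\item For (i), the paper uses weak compactness of $\upmu_N(t)$, identifies limit points as weak solutions via $\mathsf{K}_N\to\mathsf{k}_\uplambda$, and concludes with the same $\supess_\upsigma W_1$ Dobrushin estimate you use for uniqueness.
\item For (ii), the paper derives an approximate closed Fourier equation for the one-particle marginals $f_N$, with error $\frac{C}{N\upsigma+1}$ coming from the covariance bound of Lemma~\ref{lem:correl}. It then proves stability of the limit equation in a weighted Fourier $\ell^2$ norm, combined with Hardy's inequality in $L^p(0,1)$, to absorb the derivative loss. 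Finally, it bounds the variance of $\int\upvarphi\,\upmu_N$ by the same covariance estimate.
\end{itemize}
Your Sznitman-type coupling with the independent nonlinear copies $\bar\uptheta_j$ compares characteristics rather than densities. So there is no derivative loss and no Fourier stability analysis. Existence for the limit comes from Picard iteration rather than compactness, which also avoids passing to the limit in a nonlinear term whose kernel is discontinuous on the diagonal. What the paper's route buys is reusability: the marginal equation plus cumulant bounds is exactly the machinery needed for Theorem~\ref{th:lost}. A first-order coupling says nothing about the $O(N^{-1})$ cross-correlations.

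Two steps need repair.

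\textbf{The $\varepsilon N$ split in (i) does not close.} For $j$ just above $\varepsilon N$, the early particles carry total weight $\sum_{k<\varepsilon N}\upomega_{j,k}\simeq\varepsilon N/j\simeq1$. Their errors $|\uptheta_k-\bar\uptheta_k|$ are not small, because weak-$*$ convergence of $\upmu_N(0)$ says nothing about averages over $o(N)$ particles. Moreover, bounding the remaining weights by $C/(\varepsilon N)$ gives Grönwall constants $e^{Ct/\varepsilon}$, so the two errors cannot be balanced. You need a Hardy-type Grönwall, for example on $\max_j (j/N)^{\upalpha}|\uptheta_j-\bar\uptheta_j|$ with $\upalpha\in(0,1)$, using $\sum_{k<j}\upomega_{j,k}(k/N)^{-\upalpha}\lesssim(1-\upalpha)^{-1}(j/N)^{-\upalpha}$. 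The weighted consistency error is then at most $C\varepsilon^{\upalpha}$ plus the supremum over $j\ge\varepsilon N$ of the unweighted one.

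\textbf{The obstacle you identify in (ii) is not real, and your Glauber fix is incomplete.} Efron--Stein controls only $\Var F$, not $\E F$, for $F=\int\upvarphi(\upmu_N-\bar\upmu_N)$. Your crude coupling already yields the per-particle bound
$\|\uptheta_j(t)-\bar\uptheta_j(t)\|_{L^2(\Upomega)}\lesssim e^{Ct}(j^{-1/2}+N^{-\updelta})$.
Here the Lipschitz term closes since $\sum_{k<j}\upomega_{j,k}k^{-1/2}\lesssim j^{-1/2}$, and the discretization error is $\lesssim N^{-\updelta}+j^{-1}(1+\log j)$. Using Minkowski instead of Jensen then gives
$\|F\|_{L^2(\Upomega)}\le N^{-1}\|\partial_\uptheta\upvarphi\|_{L^\infty}\sum_j\|\uptheta_j-\bar\uptheta_j\|_{L^2(\Upomega)}\lesssim e^{Ct}(N^{-1/2}+N^{-\updelta})$.
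This controls mean and fluctuation at once, with no Glauber calculus. If you keep the Glauber step, note that iterating the causal Grönwall gives $|D_l^\circ\uptheta_j|\lesssim j^{-1}e^{\sqrt{Ct\log(j/l)}}e^{Ct}$ rather than $j^{-1}$. The extra factor is $\lesssim_\epsilon (j/l)^{\epsilon}$, so it is harmless.
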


We illustrate the meaning of the assumptions on the initial data.
Fix a vocabulary of size $M\ge2$, say \(\mathscr V=\{0,1,\ldots,M-1\}\), fix an encoding $\upvartheta_m:=\frac{2\uppi m}{M}\in\T$ for $m\in\mathscr V$, and fix $p_m\in C^1([0,1])$ such that for all $\upsigma$ the map $m\mapsto p_m(\upsigma)$ is a probability distribution on the vocabulary, $\sum_{m\in\mathscr V}p_m(\upsigma)=1$. Say we choose this distribution so that early positions favor words such as \texttt{please}, \texttt{record}, names such as \texttt{MICHELA, MARCO}, middle positions favor code tokens such as \texttt{zero}, \ldots, \texttt{nine}, and late positions favor words such as \texttt{city}, \texttt{PARIS}, \texttt{answer}, \texttt{no}. Now consider initial prompts constructed as follows: for each length $N$, sample independently $m_i\sim p_\cdot(i/N)$ and set $\uptheta_i(0)=\upvartheta_{m_i}$;
a typical realization may look like \texttt{please record MICHELA code zero one seven six four two city PARIS answer no}. In this setting, the variables \(\uptheta_i(0)\) are independent by construction and the assumption~\eqref{eq:init-conv} follows from the \(C^1\)-regularity of the maps~\((p_m)_m\), with \(f_\circ(\upsigma,\uptheta)=\sum_{m\in\mathscr V}p_m(\upsigma)\updelta_{\upvartheta(m)}(\uptheta)\).
In the most homogeneous version of this baseline, the initial tokens may even be taken iid and uniformly distributed on~$\T$, so that~\eqref{eq:init-conv} holds with $f_\circ\equiv1$.

As is typical for mean-field limits of non-exchangeable systems (see e.g.~\cite[Eqn~(5)]{JabinPoyatoSoler2025}), the limit equation~\eqref{eq:mfl-lambda} takes the form of a ``layered'' McKean--Vlasov equation. In the present case, this structure reflects the causal nature of the dynamics: the macroscopic variable $\upsigma$ quantifies how much of the past is accessible to a given token, while the kernel $\mathsf{k}_{\uplambda}$ encodes the positional bias.
However, the mean-field limit alone does not yield useful information on the accuracy function~\eqref{eq:Acc-soft} in the retrieval task. Indeed, the relevant signal is carried by time correlations $\Cov(e^{in\uptheta_N(t)},e^{-i\uptheta_{i_*}(0)})$, which vanish at the level of the mean-field description. Capturing the spatial dependence of the accuracy function therefore requires analyzing the next order in the propagation of chaos.

To this end, for a test function $\upvarphi\in C^\infty(\T)$, we introduce the autocorrelation $A_\upvarphi^N$ and the cross-correlation $C_\upvarphi^N$ as follows: for $t\ge0$, $\upsigma,\upsigma_0\in(0,1]$, and $\uppsi\in C^\infty(\T)$,
\begin{eqnarray}
\int \uppsi(\uptheta)\,A_{\upvarphi}^N(t,\upsigma,\uptheta)\,\mathrm d\uptheta&:=& \Cov\left(\uppsi\left(\uptheta_{\lceil N\upsigma\rceil}(t)\right),\upvarphi\left(\uptheta^0_{\lceil N\upsigma\rceil}\right)\right),\label{eq:def-Covn}\\
\int \uppsi(\uptheta)\,C_{\upvarphi}^N(t,\upsigma,\uptheta;\upsigma_0)\,\mathrm d\uptheta&:=& \Cov\left(\uppsi\left(\uptheta_{\lceil N\upsigma\rceil}(t)\right),\upvarphi\left(\uptheta^0_{\lceil N\upsigma_0\rceil}\right)\right)\,\mathds1_{\lceil N\upsigma_0\rceil<\lceil N\upsigma\rceil}.\label{eq:def-Covn-2}
\end{eqnarray}
The autocorrelation $A_\upvarphi^N$ quantifies the memory of a token at position $\upsigma$ at time $t$ with respect to its own initial value, and is of order $O(1)$. By contrast, the cross-correlation $C_\upvarphi^N$ measures the dependence of a token at position $\upsigma$ at time $t$ with respect to the initial value of an earlier token at a different position $\upsigma_0<\upsigma$; it is of order $O(N^{-1})$. It is precisely this cross-correlation that carries the information on the spatial dependence of the accuracy function. In the following result, we characterize the leading behavior of both $A_\upvarphi^N$ and $C_\upvarphi^N$.

\begin{theor}\label{th:lost}
Assume that the initial data $(\uptheta_j^0)_{1\le j\le N}$ are independent and that their distribution converges to some limit profile $f_\circ\in C^0([0,1];\Pc(\T))$ in the sense of~\eqref{eq:init-conv}, for some $\updelta>0$.
Then, given~$\upzeta\le\updelta$ with $\upzeta<1$, we have for all $t\ge0$, $\upsigma,\upsigma_0\in(0,1]$, $\upsigma_0<\upsigma$, and frequency $n\in \Z$,
\begin{eqnarray}
\left|\left(\hat A_\upvarphi^N-\hat A_\upvarphi\right)(t,\upsigma,n)\right|
&\lesssim_{\upzeta,\upvarphi}&\frac1{N^{\upzeta}\upsigma^2}\langle n\rangle^Ce^{C t},\label{eq:estim-ANA}\\
\left|\left(N\hat C_\upvarphi^N-\hat C_\upvarphi\right)(t,\upsigma,n;\upsigma_0)\right|
&\lesssim_{\upzeta,\upvarphi}&\frac1{N^{\upzeta}\upsigma_0^2}\langle n\rangle^Ce^{C t},\label{eq:estim-CNC}
\end{eqnarray}
where $A_\upvarphi,C_{\upvarphi}$ are the unique solutions of the limit equations
\begin{align}
&\left\{\begin{array}{l}\displaystyle
\partial_tA_\upvarphi(t,\upsigma,\uptheta)
=-\partial_\uptheta\left(A_\upvarphi(t,\upsigma,\uptheta)\int_0^\upsigma \mathsf{k}_{\uplambda}(\upsigma,\upsigma')(\mathsf{w}_{\upbeta}'\ast_\uptheta f)(t,\upsigma',\uptheta)\,\mathrm d\upsigma'\right),\\[3mm]\displaystyle
A_\upvarphi(0,\upsigma,\uptheta)=f_\circ(\upsigma,\uptheta)\left(\upvarphi(\uptheta)-\int_\T \upvarphi f_\circ(\upsigma,\cdot)\,\mathrm d\uptheta\right),
\end{array}\right.
\label{eq:Aphi-lambda}\\[2mm]
&\left\{\begin{array}{l}\displaystyle
\partial_tC_\upvarphi(t,\upsigma,\uptheta;\upsigma_0)
=-\partial_\uptheta\left( f(t,\upsigma,\uptheta)\int_{\upsigma_0}^\upsigma \mathsf{k}_{\uplambda}(\upsigma,\upsigma')(\mathsf{w}_{\upbeta}'\ast_\uptheta C_\upvarphi)(t,\upsigma',\uptheta;\upsigma_0)\,\mathrm d\upsigma'\right)\\[3mm]\displaystyle
\hspace{3cm}-\partial_\uptheta\left( C_\upvarphi(t,\upsigma,\uptheta;\upsigma_0)\int_0^\upsigma \mathsf{k}_{\uplambda}(\upsigma,\upsigma')(\mathsf{w}_{\upbeta}'\ast f)(t,\upsigma',\uptheta)\,\mathrm d\upsigma'\right)\\[3mm]\displaystyle
\hspace{3cm}-\partial_\uptheta\left(f(t,\upsigma,\uptheta)\,\mathsf{k}_{\uplambda}(\upsigma,\upsigma_0)(\mathsf{w}_{\upbeta}'\ast_\uptheta A_\upvarphi)(t,\upsigma_0,\uptheta)\right),\\
C_\upvarphi(0,\upsigma,\uptheta;\upsigma_0)=0.
\end{array}\right.
\label{eq:Cphi-lambda}
\end{align}
\end{theor}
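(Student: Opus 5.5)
We propose a trajectorial argument in the spirit of~\cite{MD-21}, based on Glauber calculus with respect to the independent initial data. For each index $l$, let $D_l$ be the Glauber derivative that resamples $\uptheta_l^0$ with an independent copy, $D_lY:=Y-Y^{(l)}$. For $l=\lceil N\upsigma_0\rceil$ the covariance identity gives
\[
\Cov\bigl(\uppsi(\uptheta_j(t)),\upvarphi(\uptheta_l^0)\bigr)=\E\bigl[D_l\uppsi(\uptheta_j(t))\,\upvarphi(\uptheta_l^0)\bigr]-\text{(centering)},
\]
so it suffices to understand the perturbed trajectories $\uptheta^{(l)}_j(t)$. By causality, $D_l\uptheta_j\equiv0$ for $j<l$, and the perturbation propagates only forward in the index. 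We proceed in three steps.

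\textbf{Step 1: a priori bounds on Glauber derivatives.} Differentiating~\eqref{eq:gpt-dyn-alibi}, for $j>l$ we get
\[
\tfrac{\mathrm d}{\mathrm dt}D_l\uptheta_j=\tfrac1{\mathscr Z_{N,j}}\textstyle\sum_{k<j}e^{-\frac\uplambda N(j-k)}\bigl(\ldots\bigr)(D_l\uptheta_j-D_l\uptheta_k).
\]
Since $\mathscr Z_{N,j}\simeq j$, the direct source term from $k=l$ is of size $1/j$, and a causal Grönwall argument gives $|D_l\uptheta_j(t)|\lesssim e^{Ct}/j$ for $j>l$, while $|D_l\uptheta_l|\lesssim1$. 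Summed over $j$, this is exactly where the $\log N$ of~\eqref{eq:min-requ-jabinsoler-0} enters. Iterating the same argument gives $|D_{l}D_{l'}\uptheta_j|\lesssim e^{Ct}/(j\,(l\vee l'))$ or similar, and then cumulant bounds of the type $|\kappa(\uptheta_j,\uptheta^0_{l_1},\dots)|$. These are the non-hierarchical correlation estimates. The prefactors $\upsigma^{-2}$ and $\upsigma_0^{-2}$ in~\eqref{eq:estim-ANA}--\eqref{eq:estim-CNC} arise from the $1/j$ and $1/l$ factors here. The loss $N^{-\upzeta}$ with $\upzeta<1$ absorbs the logarithms.

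\textbf{Step 2: derive approximate equations for $A^N_\upvarphi$ and $N C^N_\upvarphi$.} Apply Itô-free chain rule to $\uppsi(\uptheta_j(t))$ and take covariance with $\upvarphi(\uptheta_l^0)$. The drift of $\uptheta_j$ is a bilinear average $\frac1{\mathscr Z}\sum_k(\ldots)\mathsf w_\upbeta'(\uptheta_j-\uptheta_k)$. Expanding the covariance of a product via cumulants yields three kinds of terms:
\begin{itemize}
\item[(a)] a covariance of $\uptheta_j$ with $\uptheta^0_l$ times the mean-field field of the $\uptheta_k$, which is the transport term;
\item[(b)] the mean of $\uptheta_j$ times covariances of $\uptheta_k$, $k\in(l,j)$, with $\uptheta_l^0$, which is the $\int_{\upsigma_0}^\upsigma$ term acting on $C$;
\item[(c)] the single term $k=l$ with weight $\mathsf k_\uplambda(\upsigma,\upsigma_0)/N$ acting on $A$, which is the source.
\end{itemize}
Third-order cumulants are $O(N^{-2})$ for the cross-correlation by Step 1, and so are negligible after multiplying by $N$. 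For $A^N$ (the case $j=l$), only (a) survives at order one. Replacing empirical averages and means by $f$ uses Theorem~\ref{th:mf-gpt}(ii) and the initial assumption~\eqref{eq:init-conv}, and Riemann sums are replaced by integrals against $\mathsf k_\uplambda$.

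\textbf{Step 3: stability.} Take Fourier transforms in $\uptheta$. The linear equations~\eqref{eq:Aphi-lambda}--\eqref{eq:Cphi-lambda} are well-posed in weighted Fourier norms $\sup_n\langle n\rangle^{-C}|\hat\cdot|$; we take the losses of derivatives, and hence $\langle n\rangle^C$, from the smoothness of $\mathsf w_\upbeta$. Grönwall in $t$ together with a Volterra-type integration in $\upsigma$, using the triangular structure of $\mathsf k_\uplambda$, then closes the estimate on the differences. The main obstacle is Step 1: obtaining Glauber-derivative and cumulant bounds that are uniform despite the singular $1/\upsigma$ normalization near small indices. It is also delicate to control the sum $\sum_{k<j}$ of cross-correlations of early tokens without losing more than $N^{-\upzeta}$, so we would handle small $k\le N^{1-\upzeta}$ separately by crude bounds.
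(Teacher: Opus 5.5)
Your architecture (Glauber bounds on first and second derivatives, a cumulant expansion of $\partial_t\Cov(e^{in\uptheta_j},\upvarphi(\uptheta_\ell^0))$ into a transport term, a Volterra term over $\ell<k<j$ and the $k=\ell$ source, third cumulants of order $j^{-2}$ after summation, then stability) is the paper's. However, Step~3 fails as you formulate it. Take $u=\hat A^N_\upvarphi-\hat A_\upvarphi$, or likewise $u=N\hat C^N_\upvarphi-\hat C_\upvarphi$. Its equation contains the transport term
\[
\sum_{\upxi\in\Z}n\upxi\,\widehat{\mathsf{w}}_{\upbeta}(\upxi)\,u(t,\upsigma,n-\upxi)\int_0^\upsigma\mathsf{k}_\uplambda(\upsigma,\uptau)\hat f(t,\uptau,\upxi)\,\mathrm d\uptau,
\]
which is the Fourier form of the $V\partial_\uptheta$ part of $\partial_\uptheta(\cdot\,V)$. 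Here the unknown sits at frequency $n-\upxi$ and carries the factor $n$. The decay of $\widehat{\mathsf{w}}_\upbeta(\upxi)$ only localizes $\upxi$ and cannot remove this factor. So for any $\upalpha$ the best you get is $\langle n\rangle^{-\upalpha}|\partial_tu(n)|\lesssim|n|\sup_m\langle m\rangle^{-\upalpha}|u(m)|+\ldots$. Each Gr\"onwall iteration therefore loses a derivative, and no fixed power $\langle n\rangle^C$ comes out; the paper flags exactly this obstruction in Step~3 of the proof of Theorem~\ref{th:mf-gpt}.

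The missing idea is the energy structure of transport. Estimate $\sum_n\langle n\rangle^{-\upalpha}|u(t,\upsigma,n)|^2$ and symmetrize $n\leftrightarrow\upxi-n$ as in~\eqref{eq:estim-nxi}. This trades the factor $n$ for $\upxi$ plus the commutator $(\langle n\rangle^{-\upalpha}-\langle n-\upxi\rangle^{-\upalpha})n=O(\langle\upxi\rangle^{\upalpha+2}\langle n\rangle^{-\upalpha})$, the discrete analogue of $\int_\T vV\partial_\uptheta v=-\frac12\int_\T V'v^2$. Gr\"onwall in $t$, combined with the Volterra structure on $[\upsigma_0,\upsigma]$ for $C$, then closes. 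Pointwise bounds follow from $\langle n\rangle^{-\upalpha/2}|u(n)|\le(\sum_m\langle m\rangle^{-\upalpha}|u(m)|^2)^{1/2}$. The remaining terms, where the unknown appears at frequency $\upxi$ against $\upxi\widehat{\mathsf{w}}_\upbeta(\upxi)$, are harmless in any weighted norm.

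A second ingredient is also missing: the replacement of $\hat f_N$ by $\hat f$. In the cross-correlation equation the coefficient $\hat f_N(t,\upsigma,n-\upxi)$ appears at the fixed position $\upsigma$, so you need the law of the single particle $\uptheta_{\lceil N\upsigma\rceil}(t)$. By contrast,~\eqref{eq:conv-rate-muNf} controls the empirical measure tested against functions of $(\upsigma,\uptheta)$, which only gives $\upsigma$-averaged information. The paper proves the pointwise bound~\eqref{eq:estim-gN-res-re}, $|\hat f_N-\hat f|(t,\upsigma,n)\lesssim(N\upsigma)^{-\upzeta}e^{Ct}\langle n\rangle^C$, in Remark~\ref{rem:marginal-convergence}. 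It does so by returning to~\eqref{eq:approx-hatfN}, applying Hardy's inequality in $L^p(0,1)$ with $p>1$, and reinserting the $L^p$ bound into the pointwise equation. Since Hardy fails at $p=1$, one only gets $\upzeta=\updelta\wedge\frac1p<1$. This, not the absorption of logarithms, is the origin of the restriction $\upzeta<1$.

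Finally, your Step~1 bound $|D_l\uptheta_j(t)|\lesssim e^{Ct}/j$, uniformly in $l<j$, is false. Iterating the causal Gr\"onwall inequality gives $\sum_r(Ct\log(j/l))^r/(r!)^2$, i.e.\ the factor $e^{\sqrt{Ct\log(j/l)}}$ in~\eqref{eq:est-Gl}. This growth is genuine: the profile of Proposition~\ref{prop:Bessel} grows like $e^{2\sqrt{at\,Y(\upsigma;\upsigma_0)}}$ as $\upsigma_0\downarrow0$. It is harmless here because $j^{-1}\sum_{k<j}e^{\sqrt{Ct\log(j/k)}}\lesssim e^{Ct}$, which also makes your separate treatment of $k\le N^{1-\upzeta}$ unnecessary.
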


The limiting autocorrelation $A_\upvarphi$ is transported by the mean-field flow, while the cross-correlation~$C_\upvarphi$ evolves according to the linearization of the mean-field dynamics, with a forcing term determined by~$A_\upvarphi$.
This forcing injects the fluctuation carried by the source token at position $\upsigma_0$ into the dynamics
of~$C_\upvarphi$, and is weighted
by the limiting graphon $\mathsf{k}_{\uplambda}(\upsigma,\upsigma_0)$, thereby encoding the spatial influence of the source point. This mechanism is ultimately responsible for the dependence
of the retrieval profile on the source position $\upsigma_0$.

The proof relies on a non-hierarchical cumulant expansion adapted to the triangular causal structure of the dynamics.
More precisely, differentiating a two-point correlation in time produces third-order cumulants, so that no closed equation is available at the level of two-point correlations.
To control these higher-order terms, we appeal to Glauber calculus with respect to the initial data, following the approach of~\cite{MD-21}.
In essence, quantitative bounds on first- and second-order Glauber derivatives yield sharp control of the third-cumulant remainder, allowing one to truncate the expansion at the level of two-point correlations without resorting to a BBGKY hierarchy.

Compared with previous uses of Glauber calculus to quantify corrections to mean-field limits, notably~\cite{MD-21}, several new ingredients are required. First, the analysis must be adapted to a causal, non-exchangeable decoder dynamics, where the token label persists in the limit as the macroscopic variable $\upsigma$, and the dependency graph is triangular rather than symmetric.
Second, the interaction scale at token $j$ is of order $j^{-1}$ rather than $N^{-1}$, leading to a cumulative influence of early indices on later ones and to the singular behavior~\eqref{eq:min-requ-jabinsoler}. This disrupts standard propagation-of-chaos mechanisms and gives rise to nonstandard correction terms in the correlation estimates; see Lemma~\ref{lem:correl}.


In the specific case of iid uniformly distributed prompts, thus satisfying~\eqref{eq:init-conv} with $f_\circ\equiv1$, the mean-field solution remains constant $f\equiv1$, the autocorrelation reduces to $A_\upvarphi\equiv\upvarphi-\int_\T\upvarphi$, and the cross-correlation equation~\eqref{eq:Cphi-lambda} becomes
\[
\left\{\begin{array}{l}\displaystyle
\partial_tC_\upvarphi(t,\upsigma,\uptheta;\upsigma_0)
=-\left(\int_{\upsigma_0}^\upsigma \mathsf{k}_{\uplambda}(\upsigma,\upsigma')(\mathsf{w}_{\upbeta}''\ast_\uptheta C_\upvarphi)(t,\upsigma',\uptheta;\upsigma_0)\,\mathrm d\upsigma'
+\mathsf{k}_{\uplambda}(\upsigma,\upsigma_0)(\mathsf{w}_{\upbeta}''\ast_\uptheta\upvarphi)(\uptheta)\right),\\
C_\upvarphi(0,\upsigma,\uptheta;\upsigma_0)=0.
\end{array}\right.
\]
Taking the Fourier transform on $\T$, this linear equation diagonalizes
and we obtain
\[\hat C_\upvarphi(t,\upsigma,n;\upsigma_0)\,=\,\hat\upvarphi(n)\,g_{a_n}(t,\upsigma;\upsigma_0),\]
where
\[a_n:=n^2\widehat{\mathsf{w}}_{\upbeta}(n),\]
and where for $a\in\R$ we define the profile $g_a$ as the solution of the following Volterra--Hardy equation,
\begin{equation}\label{eq:Hardy-hom}
\left\{\begin{array}{l}\displaystyle
\partial_tg_a(t,\upsigma;\upsigma_0)
-a\left(
\mathsf{k}_{\uplambda}(\upsigma,\upsigma_0)+\int_{\upsigma_0}^{\upsigma}\mathsf{k}_{\uplambda}(\upsigma,\upsigma')g_a(t,\upsigma';\upsigma_0)\,\mathrm d\upsigma'
\right)=0,\\
g_a(0,\upsigma;\upsigma_0)=0.
\end{array}\right.
\end{equation}
By Theorem~\ref{th:lost} and the Fourier representation~\eqref{eq:Acc-soft-Fourier}, using this representation for limiting correlations, we deduce the following approximation for the soft accuracy:
\begin{equation}\label{eq:soft-accuracy-expansion}
\mathscr{A}_N(t,\upsigma_0)
=
\frac{\sqrt{\uppi/2}}{M}
+\frac{\sqrt{2\uppi}}{MN}\,\mathscr{S}_t(\upsigma_0)
+O\left(N^{-1-\upzeta}\upsigma_0^{-2}M^Ce^{Ct}\right),
\end{equation}
where the leading correction is given by
\begin{equation}\label{eq:Sb-def}
\mathscr{S}_t(\upsigma_0)
:=
\sum_{n\ge1}e^{-\frac{\uppi^2}{2M^2}n^2}g_{a_n}(t,1;\upsigma_0).
\end{equation}
We show that this quantity is indeed $\mathsf{U}$-shaped; see also Figure~\ref{fig:alibi-U-grid}.

\begin{theor}[Lost in the middle]\label{thm:U-shape}
Let $\uplambda>0$ and assume that\footnote{As $\mathsf{w}_\upbeta$ is smooth, we have $a_n\to0$ as $n\to\infty$, so this condition is non-empty for $t>0$.}
\begin{equation}\label{eq:affine-smallness}
t\sup_{n\ge1} a_n
\le
\min\left\{3-\sqrt3\,,\, 2\left(1-e^{-\uplambda}\right)\right\}.
\end{equation}
Then the following hold.
\begin{enumerate}[(i)]
\item \emph{Primacy:} $\mathscr{S}_t(\upsigma_0)\to+\infty$ as $\upsigma_0\downarrow0$.
\item \emph{Recency:} $\mathscr{S}_t'(1^-)>0$.
\item \emph{U-shape:} The function $\upsigma_0\mapsto \mathscr{S}_t(\upsigma_0)$ has a unique global minimum in $(0,1)$.
\end{enumerate}
Consequently, by~\eqref{eq:soft-accuracy-expansion}, in the case $f_\circ\equiv1$, the soft accuracy $\upsigma_0\mapsto\mathscr{A}_N(t,\upsigma_0)$ is $\mathsf{U}$-shaped with a unique interior minimum for all $N$ large enough.
\end{theor}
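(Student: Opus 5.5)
The plan is to make the Volterra--Hardy equation~\eqref{eq:Hardy-hom} explicit by a Picard (Neumann) expansion in time, and then read off all three properties from the resulting series. Since~\eqref{eq:Hardy-hom} is linear with a source independent of $t$, its solution is
\[
g_a(t,\upsigma;\upsigma_0)=\sum_{k\ge1}\frac{(at)^k}{k!}\,h_k(\upsigma;\upsigma_0),
\]
where $h_1(\upsigma;\upsigma_0)=\mathsf{k}_{\uplambda}(\upsigma,\upsigma_0)$ and $h_{k+1}(\upsigma;\upsigma_0)=\int_{\upsigma_0}^{\upsigma}\mathsf{k}_{\uplambda}(\upsigma,\upsigma')h_k(\upsigma';\upsigma_0)\,\mathrm d\upsigma'$. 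The first step is to show that this series converges locally uniformly for $\upsigma_0>0$ and may be differentiated in $\upsigma_0$ term by term. For this I would use the bound $\mathsf{k}_{\uplambda}(\upsigma,\upsigma')\lesssim\upsigma^{-1}$ and induction, which gives $h_k(\upsigma;\upsigma_0)\lesssim \upsigma_0^{-1}\log(\upsigma/\upsigma_0)^{k-1}/(k-1)!$. Substituting into~\eqref{eq:Sb-def} yields
\[
\mathscr{S}_t(\upsigma_0)=\sum_{k\ge1}\frac{t^k}{k!}\,b_k\,h_k(1;\upsigma_0),
\qquad b_k:=\sum_{n\ge1}e^{-\frac{\uppi^2}{2M^2}n^2}a_n^k .
\]
A key sign observation is that $\widehat{\mathsf{w}}_\upbeta(n)=2\uppi I_n(\upbeta)>0$. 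Hence $a_n>0$, all $b_k>0$, and all $h_k\ge0$.

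Primacy (i) then follows from positivity alone. It suffices to show that one term diverges, and I would use the $k=2$ term. As $\upsigma_0\downarrow0$ we have $\mathsf{k}_{\uplambda}(\upsigma',\upsigma_0)\sim \uplambda e^{-\uplambda\upsigma'}/(1-e^{-\uplambda\upsigma'})\sim 1/\upsigma'$ for small $\upsigma'$, while $\mathsf{k}_{\uplambda}(1,\upsigma')$ stays bounded below. Therefore $h_2(1;\upsigma_0)\gtrsim\log(1/\upsigma_0)\to\infty$.

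For recency (ii), I would expand at $\upsigma_0=1$. Each $h_k(1;\upsigma_0)$ is $O((1-\upsigma_0)^{k-1})$, so only $k=1,2$ contribute to $\mathscr{S}_t'(1^-)$. The relevant derivatives are $\partial_{\upsigma_0}h_1(1;\upsigma_0)|_{\upsigma_0=1}=\uplambda\,\mathsf{k}_{\uplambda}(1,1)$ and $\partial_{\upsigma_0}h_2(1;\upsigma_0)|_{\upsigma_0=1}=-\mathsf{k}_{\uplambda}(1,1)^2$, with $\mathsf{k}_{\uplambda}(1,1)=\uplambda/(1-e^{-\uplambda})$. This gives
\[
\mathscr{S}_t'(1^-)=\mathsf{k}_{\uplambda}(1,1)\,\uplambda\sum_{n\ge1}e^{-\frac{\uppi^2}{2M^2}n^2}\,ta_n\Big(1-\frac{ta_n}{2(1-e^{-\uplambda})}\Big),
\]
which is positive precisely under the second constraint in~\eqref{eq:affine-smallness}.

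The main obstacle is the U-shape (iii). My plan is to show that $\upsigma_0\mapsto\mathscr{S}_t(\upsigma_0)$ is strictly convex on $(0,1)$ under $t\sup_n a_n\le 3-\sqrt3$. Combined with (i) and (ii), strict convexity forces a unique interior critical point, and that point is the global minimum.

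To get convexity, I would work frequency by frequency: it suffices that $\partial_{\upsigma_0}^2 g_a(t,1;\upsigma_0)>0$ for each $a$ with $0<at\le3-\sqrt3$. For this I would derive a closed form for $G(\upsigma_0):=g_a(t,1;\upsigma_0)$. Differentiating the integral equation in $\upsigma_0$ gives the relation $\partial_{\upsigma_0}h_{k+1}(\upsigma;\upsigma_0)=-\mathsf{k}_{\uplambda}(\upsigma_0,\upsigma_0)\ldots$, which suggests that in the variable $u=1-e^{-\uplambda\upsigma}$ the kernel becomes the Hardy kernel $\mathds1_{u'<u}\,\uplambda/u$ (up to a measure change). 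In that variable $g_a$ should be expressible explicitly as a combination of powers $(u/u_0)^{at}$ times elementary factors.

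Once $G$ is explicit, its second derivative should factor as a positive quantity times a quadratic polynomial in $at$. That polynomial should be positive exactly when $at<3-\sqrt3$, whose natural root is $x^2-6x+6$. If the closed form proves unwieldy, my fallback is a Sturm-type argument: write $G$ as the solution of a second-order ODE in $\upsigma_0$, obtained by differentiating the Volterra relation twice, and show that $G'$ has at most one zero.

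Finally, the consequence for $\mathscr{A}_N$ follows from~\eqref{eq:soft-accuracy-expansion}. Fix $\upsigma_0$ in compact subintervals of $(0,1]$; the uniform error $O(N^{-1-\upzeta})$ there is dominated by the $N^{-1}$ variations of $\mathscr{S}_t$ for large $N$.
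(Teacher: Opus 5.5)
Your arguments for (i) and (ii) are correct, including the formula for $\mathscr{S}_t'(1^-)$. The plan for (iii), however, rests on a false claim: $\upsigma_0\mapsto g_a(t,1;\upsigma_0)$ need not be convex for $0<at\le3-\sqrt3$, even when the second constraint in \eqref{eq:affine-smallness} also holds.

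Your own Neumann series gives the closed form at once. Since $\mathsf{k}_{\uplambda}(\upsigma,\upsigma')\,\mathsf{k}_{\uplambda}(\upsigma',\upsigma_0)=\mathsf{k}_{\uplambda}(\upsigma,\upsigma_0)\,\partial_{\upsigma'}Y(\upsigma';\upsigma_0)$, induction yields $h_k=\mathsf{k}_{\uplambda}(\upsigma,\upsigma_0)\,Y(\upsigma;\upsigma_0)^{k-1}/(k-1)!$. Hence $g_a(t,1;\upsigma_0)=\mathsf{k}_{\uplambda}(1,\upsigma_0)\,\uppsi_c(Y(\upsigma_0))$, with $c=at$ and $\uppsi_c(y)=\sum_{k\ge0}\frac{c^{k+1}y^k}{k!(k+1)!}$. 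This is a Bessel function of $2\sqrt{cy}$, not a combination of powers $(u/u_0)^{at}$. Expanding to second order at $\upsigma_0=1$, with $r_0:=(1-e^{-\uplambda})^{-1}$, gives
\[
\partial_{\upsigma_0}^2 g_a(t,1;\upsigma_0)\Big|_{\upsigma_0=1}
=\frac{\uplambda^2 c}{6}\,\mathsf{k}_{\uplambda}(1,1)\left(6-3r_0(3-r_0)\,c+r_0^2c^2\right).
\]
The last factor reduces to your polynomial $c^2-6c+6$ only in the limit $r_0\to1$, i.e.\ $\uplambda\to\infty$. Its $r_0$-derivative at $r_0=1$ is $c(2c-3)<0$ for $c=3-\sqrt3$.

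For instance, take $\uplambda=3$ and $at=3-\sqrt3$. Both constraints in \eqref{eq:affine-smallness} hold, yet the factor is about $-0.016$. So $g_a(t,1;\cdot)$ is strictly concave on some interval $(1-\varepsilon,1)$. For small $\upbeta$ one has $a_2/a_1=O(\upbeta)$, so the first mode dominates and the same is true of $\mathscr{S}_t$ itself. Hence no convexity argument in the variable $\upsigma_0$ can prove (iii) under this hypothesis, and your Sturm-type fallback is too vague to fill the hole.

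The missing idea is to look for convexity after the change of variables $y=Y(\upsigma_0)=\log\frac{e^{\uplambda}-1}{e^{\uplambda\upsigma_0}-1}$, as the paper does. This is a smooth decreasing bijection from $(0,1]$ onto $[0,\infty)$. Since $\mathsf{k}_{\uplambda}(1,\upsigma_0)=\frac{\uplambda}{e^{\uplambda}-1}+\uplambda e^{-y}$, each mode becomes $h_c(y)=\left(\frac{\uplambda}{e^{\uplambda}-1}+\uplambda e^{-y}\right)\uppsi_c(y)$, and
\[
h_c''=\frac{\uplambda}{e^{\uplambda}-1}\,\uppsi_c''+\uplambda e^{-y}\left(\uppsi_c''-2\uppsi_c'+\uppsi_c\right).
\]
The $y^k$-coefficient of the last bracket is $\frac{c^{k+1}}{k!(k+3)!}\left(c^2-2(k+3)c+(k+2)(k+3)\right)$. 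It is nonnegative for every $k$ once $c\le3-\sqrt3$, and the binding case $k=0$ is exactly $c^2-6c+6$.

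So $\mathscr{S}_t\circ Y^{-1}$ is strictly convex on $[0,\infty)$. It has negative slope at $y=0$ (your (ii)) and diverges as $y\to\infty$ (your (i)). Its derivative therefore has exactly one zero. Since $Y'$ never vanishes, this pulls back to the unique critical point of $\mathscr{S}_t$ on $(0,1)$, which is its unique global minimizer.
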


\begin{figure}
\centering
\includegraphics[width=.31\textwidth]{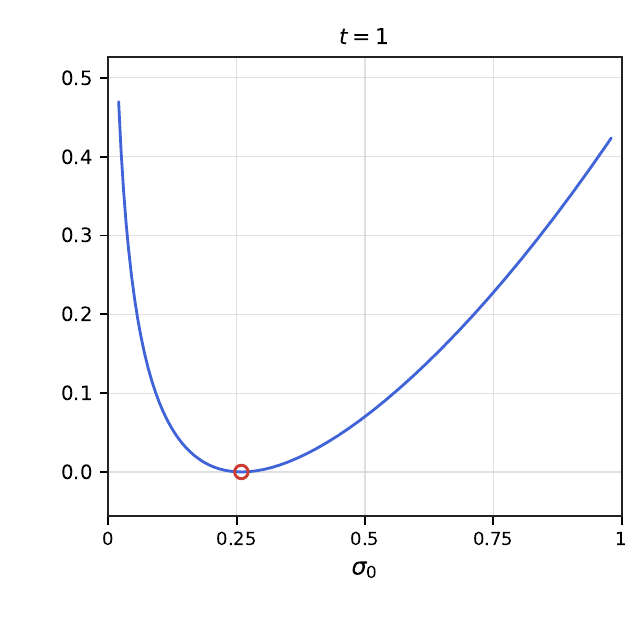}
\includegraphics[width=.31\textwidth]{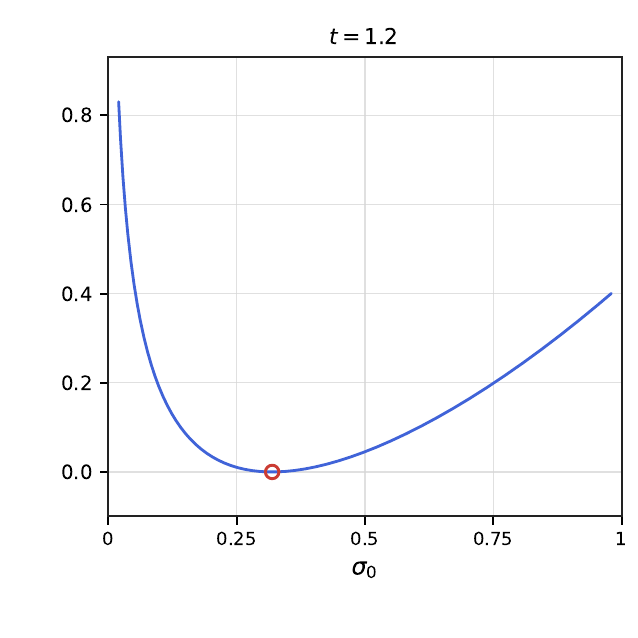}
\includegraphics[width=.31\textwidth]{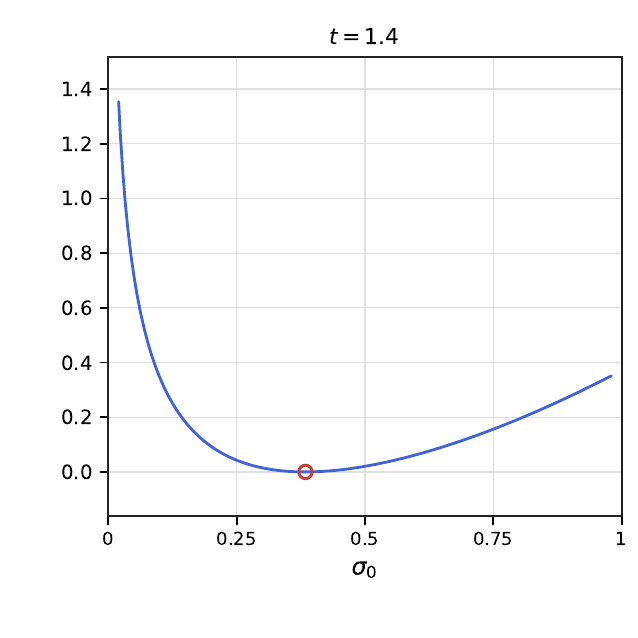}
\includegraphics[width=.31\textwidth]{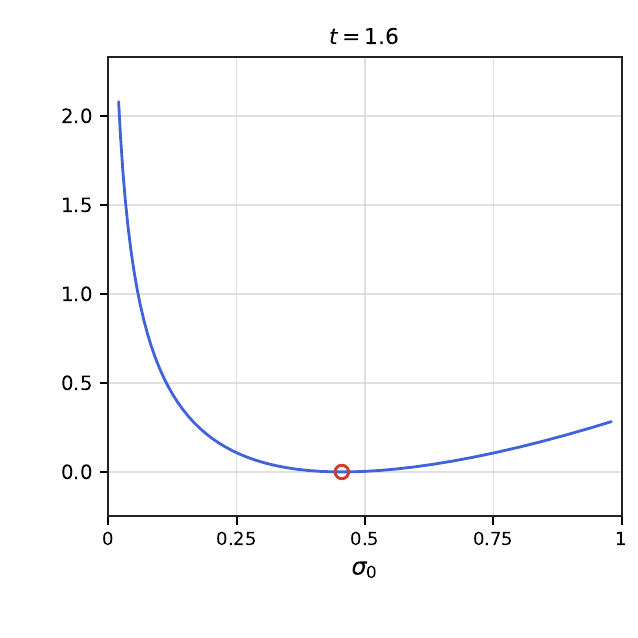}
\includegraphics[width=.31\textwidth]{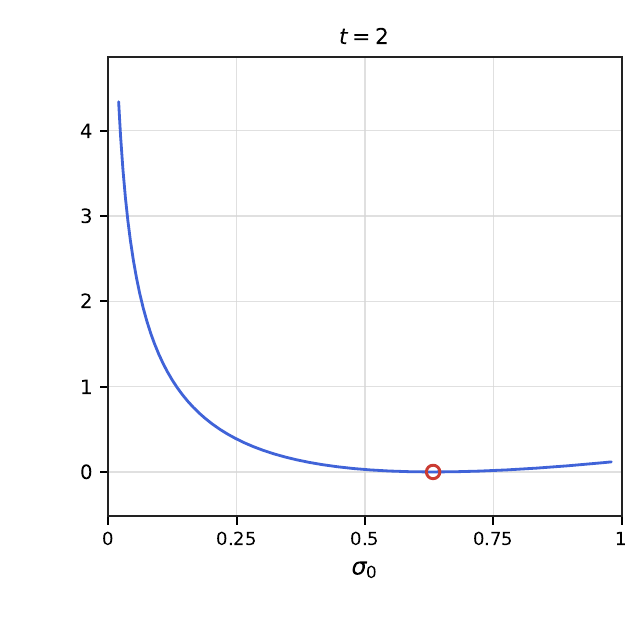}
\includegraphics[width=.31\textwidth]{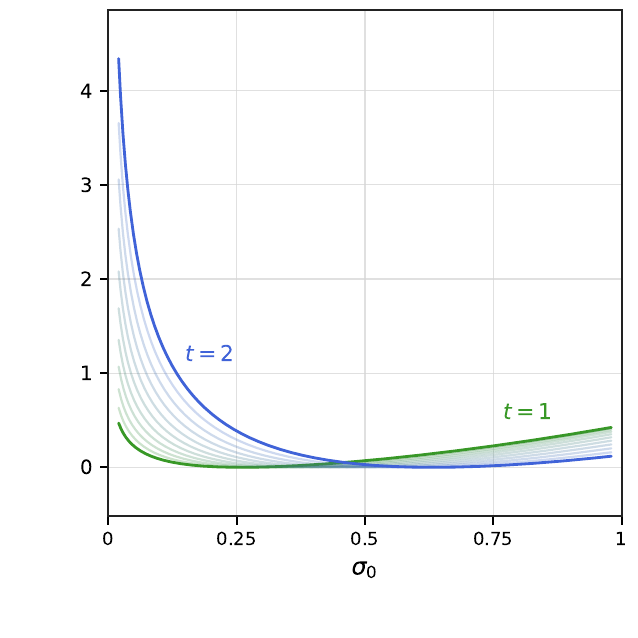}
\vspace{-0.5cm}\caption{The profile predicted by Theorem~\ref{thm:U-shape} in the explicit regime $\upbeta=\uplambda=1$ and $M=8$. Each panel plots the centered correction $\upsigma_0\mapsto \mathscr{S}_t(\upsigma_0)-\min \mathscr{S}_t$ from~\eqref{eq:Sb-def}.}
\label{fig:alibi-U-grid}
\end{figure}

Results on the emergence of primacy and recency biases for Transformers appear in recent works dealing with simplified or discrete-time models, but these use totally different approaches and methods---see \cite{wu2025on, herasimchyk2026residual, chowdhury2026lost}.

The proof of Theorem~\ref{thm:U-shape} proceeds by reducing the Volterra--Hardy equation~\eqref{eq:Hardy-hom} to a Goursat problem via a change of variables, which can be solved explicitly in terms of modified Bessel functions. The crucial computation is:

\begin{prop} \label{prop:Bessel}
For $a>0$, $\uplambda\in\R$, and $0<\upsigma_0<\upsigma\le1$, the unique solution of \eqref{eq:Hardy-hom} is given by \eqref{eq:Bessel-full} when $\uplambda\neq0$ and by \eqref{eq:Bessel-output} when $\uplambda=0$:
\begin{equation}\label{eq:Bessel-full}
g_a(t,\upsigma;\upsigma_0)
=
\frac{\uplambda e^{-\uplambda(\upsigma-\upsigma_0)}}{1-e^{-\uplambda \upsigma}}
\sqrt{\frac{at}{Y(\upsigma;\upsigma_0)}}
\,I_1 \left(2\sqrt{at\,Y(\upsigma;\upsigma_0)}\right),
\end{equation}
where
\begin{equation}\label{eq:Ylambda}
Y(\upsigma;\upsigma_0):=
\log \frac{e^{\uplambda \upsigma}-1}{e^{\uplambda\upsigma_0}-1}.
\end{equation}
For $\uplambda=0$, this reduces to
\begin{equation}\label{eq:Bessel-output}
g_a(t,\upsigma;\upsigma_0)
=
\frac{1}{\upsigma}\sqrt{\frac{at}{\log(\upsigma/\upsigma_0)}}\,
I_1 \left(2\sqrt{at\,\log(\upsigma/\upsigma_0)}\right).
\end{equation}
\end{prop}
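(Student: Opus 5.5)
We reduce \eqref{eq:Hardy-hom} to a Goursat problem by a change of variables adapted to the kernel $\mathsf{k}_\uplambda$, then solve it by a series that sums to a Bessel function.

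\emph{Step 1: existence and uniqueness.} Since \eqref{eq:Hardy-hom} is linear with zero initial data, we integrate in time. This gives a Volterra equation in $(t,\upsigma)$,
\[
g_a(t,\upsigma)=a t\,\mathsf{k}_\uplambda(\upsigma,\upsigma_0)+a\int_0^t\!\int_{\upsigma_0}^{\upsigma}\mathsf{k}_\uplambda(\upsigma,\upsigma')g_a(s,\upsigma')\,\mathrm d\upsigma'\mathrm ds .
\]
For fixed $\upsigma_0>0$ the kernel is bounded on $\upsigma'\in[\upsigma_0,1]$, so Picard iteration converges uniformly. Indeed, the $k$-th iterate is bounded by $C^k t^k/k!$ times the base term. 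This yields a unique solution, given by the Neumann series $g_a=\sum_{k\ge0}a^{k+1}\frac{t^{k+1}}{(k+1)!}G_k(\upsigma;\upsigma_0)$. Here $G_0=\mathsf{k}_\uplambda(\upsigma,\upsigma_0)$ and $G_{k}(\upsigma)=\int_{\upsigma_0}^\upsigma\mathsf{k}_\uplambda(\upsigma,\upsigma')G_{k-1}(\upsigma')\,\mathrm d\upsigma'$. So it suffices to compute the iterated kernels $G_k$ explicitly and then sum.

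\emph{Step 2: change of variables (the key computation).} Write $\mathsf{k}_\uplambda(\upsigma,\upsigma')=\frac{\uplambda e^{-\uplambda\upsigma}}{1-e^{-\uplambda\upsigma}}e^{\uplambda\upsigma'}=\frac{\uplambda e^{\uplambda\upsigma'}}{e^{\uplambda\upsigma}-1}$. Set $G_k(\upsigma)=\frac{\uplambda e^{\uplambda\upsigma_0}}{e^{\uplambda\upsigma}-1}H_k(\upsigma)$, so that $H_0=1$. The recursion then becomes
\[
H_k(\upsigma)=\int_{\upsigma_0}^\upsigma\frac{\uplambda e^{\uplambda\upsigma'}}{e^{\uplambda\upsigma'}-1}H_{k-1}(\upsigma')\,\mathrm d\upsigma' .
\]
The measure $\frac{\uplambda e^{\uplambda\upsigma'}}{e^{\uplambda\upsigma'}-1}\mathrm d\upsigma'$ is exactly $\mathrm dY(\upsigma';\upsigma_0)$ with $Y$ as in \eqref{eq:Ylambda}. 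Hence by induction $H_k=Y^k/k!$. Equivalently, in the variables $(t,Y)$ the function $h=(e^{\uplambda\upsigma}-1)g_a/(\uplambda e^{\uplambda\upsigma_0})$ satisfies the Goursat problem $\partial_t\partial_Y h=a h$ with data $h(0,Y)=0$ and $\partial_t h(t,0)=a$. This is the Goursat formulation mentioned before the statement, and either viewpoint suffices.

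\emph{Step 3: summation.} Combining the previous steps,
\[
g_a=\frac{\uplambda e^{\uplambda\upsigma_0}}{e^{\uplambda\upsigma}-1}\sum_{k\ge0}\frac{(at)^{k+1}Y^k}{(k+1)!\,k!}.
\]
Using $I_1(2z)=\sum_k z^{2k+1}/(k!(k+1)!)$ with $z=\sqrt{atY}$, the sum equals $\sqrt{at/Y}\,I_1(2\sqrt{atY})$. Rewriting $\frac{e^{\uplambda\upsigma_0}}{e^{\uplambda\upsigma}-1}=\frac{e^{-\uplambda(\upsigma-\upsigma_0)}}{1-e^{-\uplambda\upsigma}}$ gives \eqref{eq:Bessel-full}.

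For $\uplambda=0$ we take the limit $\uplambda\to0$. We have $Y\to\log(\upsigma/\upsigma_0)$ and the prefactor tends to $1/\upsigma$. Alternatively, we rerun Step 2 with $\mathsf{k}_0=\upsigma^{-1}\mathds1_{\upsigma'<\upsigma}$, which gives $\mathrm dY=\mathrm d\upsigma'/\upsigma'$ and hence \eqref{eq:Bessel-output}.

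The only genuine point is Step 2: recognizing that the factorized form of $\mathsf{k}_\uplambda$ makes the iterated kernels powers of $Y$. The rest is routine, with $a>0$ only used so that the square roots are real. Since the series is entire in $a$, the formula in fact holds for all $a$ when interpreted via the series.
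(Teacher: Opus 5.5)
Your proof is correct and rests on the same structural fact as the paper's proof. That fact is the factorization $\mathsf{k}_\uplambda(\upsigma,\upsigma')=\uplambda e^{\uplambda\upsigma'}/(e^{\uplambda\upsigma}-1)$, together with the variable $Y(\cdot;\upsigma_0)$, which turns the $\upsigma$-integration into integration in $\mathrm dY$. The packaging differs. The paper passes to the weighted primitive $F=e^{\uplambda\upsigma_0}+\int_{\upsigma_0}^{\upsigma}e^{\uplambda\upsigma'}g_a\,\mathrm d\upsigma'$ (in your notation, $F=e^{\uplambda\upsigma_0}a^{-1}\partial_t h$). It then solves the Goursat problem $\partial_t\partial_yU=aU$ with constant data by a Laplace transform in $t$, obtaining $U=e^{\uplambda\upsigma_0}I_0(2\sqrt{aty})$, and differentiates back to get $g_a$ and the $I_1$ profile. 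You instead integrate in time and sum the Neumann series of the Volterra equation, which gives the $I_1$ series for $g_a$ directly. The two computations coincide: termwise inversion of the powers $\upmu^{-(k+1)}$ is exactly your power series in $t$. Your route additionally justifies the word ``unique'' in the statement, since the iteration bound $(|a|t)^k/k!$ applied to the difference of two solutions rules out any other bounded solution. The paper's Laplace-transform argument constructs the solution without addressing uniqueness. In exchange, the paper's route makes the underlying hyperbolic Goursat structure explicit, along with the simpler $I_0$ profile for the primitive.
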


\begin{figure}
\centering
\includegraphics[width=.31\textwidth]{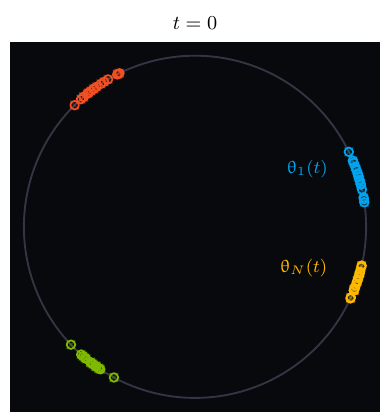}
\includegraphics[width=.31\textwidth]{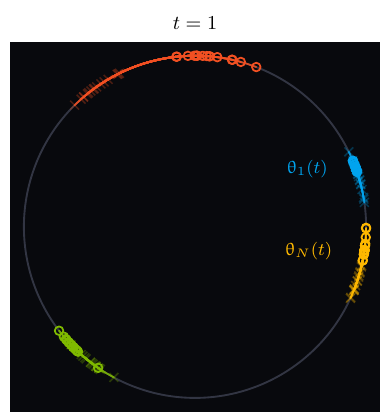}
\includegraphics[width=.31\textwidth]{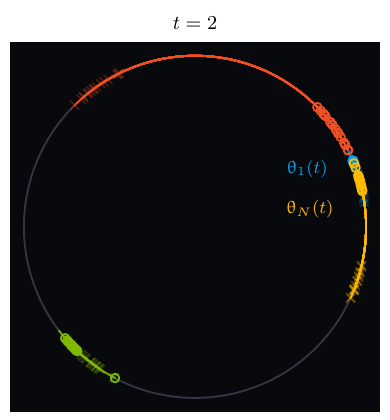}
\includegraphics[width=.31\textwidth]{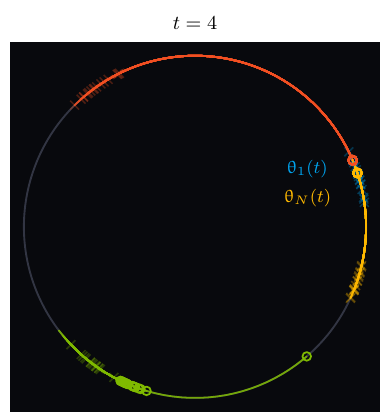}
\includegraphics[width=.31\textwidth]{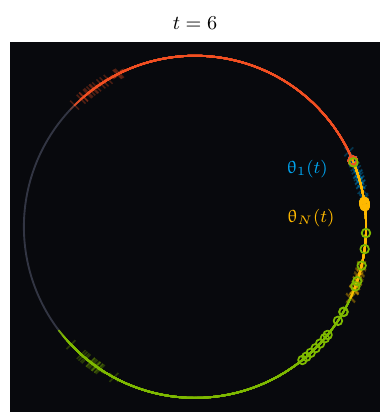}
\includegraphics[width=.31\textwidth]{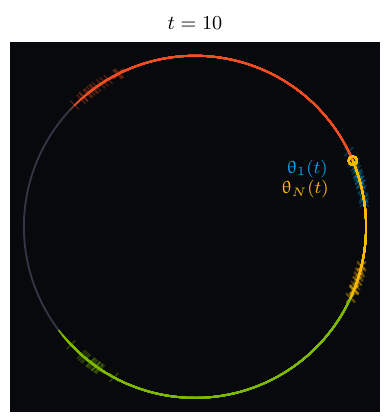}
\caption{Simulation of the particle system~\eqref{eq:gpt-dyn-alibi} for
\(N=64\) and \(\upbeta=\uplambda=1\), initialized near four small angular
clusters. Crosses mark the initial angles \(\uptheta_k(0)\), while hollow
points mark the evolved angles \(\uptheta_j(t)\); the labels follow the first
and last positions. The plot is read by comparing evolved angles with initial
ones: for a fixed output position~\(j\), proximity of \(\uptheta_j(t)\) to
\(\uptheta_k(0)\) is the trajectory-level analogue of alignment with source
position \(k\), and for \(j=N\) this is essentially the distance entering the
 retrieval observable. Two effects are visible. The drift of
the terminal particle \(\uptheta_N(t)\) toward the first particle
\(\uptheta_1(t)\) shows the primacy effect; collapse toward the first token is proved in~\cite{karagodin2024clustering},
and is consistent with empirical work on attention sinks
\cite{Xiao2024StreamingLLM,Gu2025AttentionSink,Barbero2025FirstToken}. All the while, at short times the terminal particle remains close to the packet of
nearby high-index particles initialized near \(\uptheta_N(0)\); this coherent
terminal packet somewhat reflects the recency side of the
\(\mathsf{U}\)-shaped profile.}
\label{fig:particle-cluster-snapshots}
\end{figure}

\subsection{Extensions}
Our results extend in several natural directions.
First, the specific choice of the interaction kernel $\mathsf{w}_{\upbeta}(\uptheta)=e^{\upbeta\cos\uptheta}$ plays no essential role in the analysis: it can be replaced by an arbitrary smooth and even periodic interaction kernel. Likewise, the arguments carry over to higher-dimensional phase spaces.
We next briefly discuss extensions with respect to positional encodings
and to the more general causal self-attention dynamics.

\subsubsection{On positional encodings}\label{sssec:positional-encodings}
We focus on the ALiBi encoding factor for simplicity, but the proofs of Theorems~\ref{th:mf-gpt} and~\ref{th:lost} extend with only minor modifications to more general causal weights of the form $b((j-k)/N)$, provided $b$ is at least continuous on $[0,1]$. In that case, the limiting kernel $\mathsf{k}_{\uplambda}(\upsigma,\upsigma_0)$ is replaced by
\[\frac{b(\upsigma,\upsigma_0)}{\int_0^\upsigma b(\upsigma,\upsigma')\mathrm d\upsigma'}\mathds1_{\upsigma_0<\upsigma}.\]
The proof of Theorem~\ref{thm:U-shape} adapts accordingly, with the parameter $\uplambda$ replaced by the local slope  $b'(0)$.

Avoiding the resulting $\mathsf{U}$-shaped profile therefore should require modifying the positional encoding so as to weaken this monotone recency mechanism or to introduce a competing phase-sensitive effect, as done in empirical works \cite{jiang-etal-2024-longllmlingua, hsieh-etal-2024-found}.
RoPE-type encodings \cite{Su2021RoFormer} provide a natural example, but fall beyond the present theory because the positional dependence then acts inside the interaction phase rather than through a scalar causal kernel, so that the Volterra--Hardy reduction no longer applies.

\subsubsection{General self-attention}
The reduced model~\eqref{eq:gpt-dyn-alibi} can be viewed as a simplification of the $d=2$,
$Q=K=V\equiv\Id$ specialization of the causal spherical self-attention dynamics with
ALiBi encoding:
\begin{equation}\label{eq:full-dyn}
\frac{\mathrm d}{\mathrm dt} x_j(t)
=
\frac{1}{\mathscr{Z}_{N,j}(t)}
\sum_{k=1}^{j-1}
\exp\left(
\upbeta \left\langle Q(t)x_j(t),\,K(t)x_k(t)\right\rangle
-\frac{\uplambda}{N}(j-k)
\right)
\mathsf{P}_{x_j(t)}^\perp \left(V(t)x_k(t)\right),
\end{equation}
where $x_j(t)\in\mathbb S^{d-1}$,
\(
\mathsf{P}_x^\perp y:=y-\left\langle x,y\right\rangle x,
\)
and with normalization
\[
\mathscr{Z}_{N,j}(t)
:=
\sum_{k=1}^{j-1}
\exp\left(
\upbeta \left\langle Q(t)x_j(t),\,K(t)x_k(t)\right\rangle
-\frac{\uplambda}{N}(j-k)
\right).
\]
Note that the normalization further depends on the strength of interactions, which we removed for simplicity in~\eqref{eq:gpt-dyn-alibi}.
A full analysis of~\eqref{eq:full-dyn} is left for
future work.

\subsection{Organization}

The paper is organized as follows. Section~\ref{sec:app} records some elementary convergence estimates for interaction weights.
Section~\ref{sec:cumulants} develops the cumulant estimates that form the backbone of all our convergence proofs: using Glauber calculus adapted to the triangular causal structure of the dynamics, we derive sharp bounds on covariances and third cumulants of particle trajectories. Section~\ref{sec:mfl-proof} contains the proof of Theorem~\ref{th:mf-gpt} on the mean-field limit. Section~\ref{sec:corr-proof} gives the proof of Theorem~\ref{th:lost} on the characterization of correlations. Finally, Section~\ref{sec:litm} completes the proof of Theorem~\ref{thm:U-shape}.

\section{Interaction weights and limiting graphon}\label{sec:app}

We introduce the following short-hand notation for the non-exchangeable, directed interaction weights appearing in the model~\eqref{eq:gpt-dyn-alibi}: for~$1\le k,j\le N$, set
\begin{equation}\label{eq:shortdef-omega}
\upomega_{j,k}
:=
\frac{e^{-\frac{\uplambda}{N}(j-k)}\mathds1_{k<j}}{\mathscr{Z}_{N,j}},
\end{equation}
where the normalization $\mathscr{Z}_{N,j}$ ensures
$\sum_{k=1}^N\upomega_{j,k}=1$ for $1< j\le N$.
A direct computation yields, uniformly for all $1\le k,j\le N$,
\[
\upomega_{j,k}
=
\frac{e^{\frac{\uplambda}{N}k}\mathds1_{ k<j}}{\sum_{m=1}^{j-1}e^{\frac{\uplambda}{N}m}}
\simeq j^{-1}\mathds1_{k<j}.
\]
In particular, this implies that the weights are row-stochastic but not column-balanced, as already emphasized in~\eqref{eq:min-requ-jabinsoler-0}:
\begin{equation}\label{eq:min-requ-jabinsoler}
\sup_{1\le k\le N}\sum_{j=1}^N\upomega_{j,k}\simeq\log N.
\end{equation}
This logarithmic divergence reflects the cumulative influence of small indices on later particles and places the model outside the standard framework for mean-field/graph limits, namely beyond the minimal requirements of~\cite[Assumption~(3)]{JabinPoyatoSoler2025}.

We next introduce a continuum representation of this interaction structure. Define the rescaled interpolated kernel
\begin{equation}\label{eq:def-KN}
\mathsf{K}_N(\upsigma,\upsigma')
:=N\upomega_{\lceil N\upsigma\rceil,\lceil N\upsigma'\rceil},\qquad \upsigma,\upsigma'\in(0,1].
\end{equation}
This can be viewed as a continuum interaction kernel associated with the underlying directed weighted graph (a directed graphon in the sense of graph limit theory).
By construction, it satisfies
\[\int_0^1 \mathsf{K}_N(\upsigma,\upsigma')\,\mathrm d\upsigma'=1,\qquad\upsigma\in\left(\frac1N,1\right],\]
and the pointwise uniform bound
\[
\mathsf{K}_N(\upsigma,\upsigma')\lesssim \upsigma^{-1}\mathds1_{\upsigma'<\upsigma}.
\]
It is easily checked that $\mathsf{K}_N$ converges almost everywhere to the limiting kernel~$\mathsf{k}_{\uplambda}$ defined in~\eqref{eq:kernel-Klambda}. The latter satisfies the same normalization and pointwise bound,
\[
\int_0^1\mathsf{k}_\uplambda(\upsigma,\upsigma')\,\mathrm d\upsigma'=1,\qquad\mathsf{k}_{\uplambda}(\upsigma,\upsigma')\lesssim \upsigma^{-1}\mathds1_{\upsigma'<\upsigma},
\]
while the logarithmic divergence in~\eqref{eq:min-requ-jabinsoler} is reflected by the singular behavior
\[\int_{0}^1 \mathsf{k}_{\uplambda}(\upsigma,\upsigma')\,\mathrm d\upsigma
\sim |\!\log \upsigma'|,
\qquad \upsigma'\downarrow0,\]
which will create technical difficulties in the analysis.
Finally, we quantify the convergence $\mathsf{K}_N\to\mathsf{k}_{\uplambda}$.
For $\uplambda\neq0$, using the geometric-series identity
\[
\mathscr{Z}_{N,j}
=
\frac{1-e^{-\frac{\uplambda}{N}(j-1)}}{e^{\frac{\uplambda}{N}}-1},
\]
we obtain, uniformly for $1\le k<j\le N$,
\[
N\upomega_{j,k}
=
\frac{N(e^{\frac{\uplambda}{N}}-1)}{1-e^{-\frac{\uplambda}{N}(j-1)}}\,
e^{-\frac{\uplambda}{N}(j-k)}
=
\mathsf{k}_{\uplambda}\left(\frac{j}{N},\frac{k}{N}\right)
+O\left(\frac{1}{N(\frac jN)^2}\right).
\]
For $\uplambda=0$, the same estimate is immediate from
\[
N\upomega_{j,k}=\frac{N}{j-1}=\frac{1}{j/N}+O\left(\frac{1}{N(\frac jN)^2}\right).
\]
Consequently, we can deduce
for all $\upsigma\in(0,1]$,
\begin{equation}\label{eq:estim-KNk}
\int_0^1|\mathsf{K}_N(\upsigma,\cdot)-\mathsf{k}_{\uplambda}(\upsigma,\cdot)|\le\frac C{N\upsigma+1}.
\end{equation}
This convergence estimate will be used repeatedly in the sequel.
\section{Cumulant bounds}\label{sec:cumulants}

The mean-field limit and the characterization of correlations both rely on quantitative control of the statistical dependence between particle trajectories. Provided that the initial data are independent, we can use the so-called Glauber calculus of~\cite{MD-21} to estimate correlations. However, compared with~\cite{MD-21}, some important care is needed in the present non-exchangeable setting: the smallness of the interaction at particle $j$ is of order $j^{-1}$ rather than $N^{-1}$, which creates difficulties when $j$ is small, as reflected for instance by~\eqref{eq:min-requ-jabinsoler}. For this reason, nontrivial corrections appear in correlation estimates, see~\eqref{eq:est-cov-th} and~\eqref{eq:est-kap3-th} below,
but they are harmless when applied to macroscopic indices $j=\lceil N\upsigma\rceil$ with $\upsigma>0$ bounded away from zero.

We start by recalling tools from Glauber calculus with respect to independent initial data.
Let $\upnu_j:=\mathrm{Law}(\uptheta_j^\circ)$ and let 
\(
(\Upomega,\mathcal F,\Pp):=\bigotimes_{j=1}^N\left(\T,\mathcal B(\T),\upnu_j\right)
\)
be the product probability space carrying the independent initial data. We
identify $\uptheta_j^\circ$ with the $j$-th coordinate map on $\Upomega$. For
each $k$, we denote by
\[
\E_k^\circ[Y]:=\E\left[Y\,\middle|\,(\uptheta_j^\circ)_{j:j\ne k}\right]
\]
the conditional expectation given all variables except $\uptheta_k^\circ$, or equivalently the integration with respect to $\uptheta_k^\circ$, and we define the Glauber derivative by
\[
D_k^\circ Y:=Y-\E_k^\circ[Y].
\]
The associated Glauber Laplacian is
\[
\mathcal L^\circ:=\sum_{k=1}^N (D_k^\circ)^*D_k^\circ=\sum_{k=1}^N D_k^\circ,
\]
and we write $\mathcal T^\circ:=(\mathcal L^\circ)^{-1}$ for its inverse defined on centered random variables. We shall only use the following standard properties, proved e.g.\@ in~\cite[Lemmas~2.4(v) and~2.5]{MD-21}: for centered random variables~$Y,Y'\in L^2(\Upomega)$ and all $1<p<\infty$,
\begin{equation}\label{eq:glauber-rep-covariance}
\Cov\left(Y,Y'\right)=\sum_{k=1}^N\E\left[(D_k^\circ Y)\,\mathcal T^\circ (D_k^\circ Y')\right],
\qquad
\left\|\mathcal T^\circ(D_k^\circ Y)\right\|_{L^p(\Upomega)}\lesssim \left\|D_k^\circ Y\right\|_{L^p(\Upomega)},
\end{equation}
hence
\begin{equation}\label{eq:glauber-covariance}
\left|\Cov\left(Y,Y'\right)\right|
\lesssim
\sum_{k=1}^N\left\|D_k^\circ Y\right\|_{L^2(\Upomega)}\left\|D_k^\circ Y'\right\|_{L^2(\Upomega)}.
\end{equation}

Our main correlation estimates take on the following guise. For our purposes, it is enough to focus on second and third cumulants, but the proof extends easily to higher cumulants. Recall that the third joint cumulant is defined as $\upkappa_{1,1,1}(Y,Y',Y'')=\E[(Y-\E[Y])(Y'-\E[Y'])(Y''-\E[Y''])]$.

\begin{lem}\label{lem:correl}
Assume that the initial data $(\uptheta_j^0)_{1\leq j \leq N}$ are independent. Then, for all $\upvarphi\in C^2(\T)$, $t,t',t''\in[0,T]$, and all~$i>j>k$,
\begin{eqnarray}
\left|\Cov\left(\upvarphi\left(\uptheta_i(t)\right),\upvarphi\left(\uptheta_j(t')\right)\right)\right|&\lesssim&
i^{-1}e^{\sqrt{C T\log(i/j)}}e^{C T}\|\upvarphi'\|_{L^\infty(\T)}^2,
\label{eq:est-cov-th}\\
\hspace{-1cm}\left|\upkappa_{1,1,1}\left(\upvarphi\left(\uptheta_i(t)\right),\upvarphi\left(\uptheta_j(t')\right),\upvarphi\left(\uptheta_k(t'')\right)\right)\right|&\lesssim&(ij)^{-1}e^{\sqrt{C T\log(i/k)}}e^{C T}\|\upvarphi'\|_{L^{\infty}(\T)}^2\|\upvarphi'\|_{W^{1,\infty}(\T)}.\label{eq:est-kap3-th}
\end{eqnarray}
\end{lem}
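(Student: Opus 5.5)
The plan is to reduce both estimates to pointwise bounds on first- and second-order Glauber derivatives of the trajectories, and then to insert these into the representation~\eqref{eq:glauber-rep-covariance}. Since $D_k^\circ$ only perturbs the initial datum of particle $k$, and particle $i$ only sees indices $l<i$, causality gives $D_k^\circ\uptheta_i\equiv0$ for $i<k$. For $i=k$ we have the trivial bound $|D_k^\circ\uptheta_k(t)|\le 2\uppi$. For $i>k$, I would apply $D_k^\circ$ to~\eqref{eq:gpt-dyn-alibi}. Writing $\mathsf{w}_\upbeta'(\uptheta_i-\uptheta_l)-\E_k^\circ[\mathsf{w}_\upbeta'(\uptheta_i-\uptheta_l)]$ via a coupling (an independent copy of $\uptheta_k^\circ$), the Lipschitz bound on $\mathsf{w}'_\upbeta$ gives
\[
\delta_i^k(t):=\sup_{s\le t}\|D_k^\circ\uptheta_i(s)\|_{L^\infty}\le C\int_0^t\Big(\delta^k_i+\sum_{k\le l<i}\upomega_{i,l}\,\delta^k_l\Big)\,\mathrm ds,
\qquad \upomega_{i,l}\simeq i^{-1}.
\]
The Gronwall factor $e^{CT}$ absorbs the diagonal term. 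What remains is a discrete Volterra--Hardy inequality $\delta_i\le \frac{CT}{i}\sum_{k\le l<i}\delta_l$ with $\delta_k\lesssim1$. I would solve it by iteration: the $m$-fold iterate is bounded by $\frac{(CT)^m}{i}\frac{(\log(i/k))^{m-1}}{(m-1)!}$, since $\sum_{l}\frac1l\frac{(\log(l/k))^{m-1}}{(m-1)!}\lesssim\frac{(\log(i/k)+1)^m}{m!}$. This is the discrete analogue of the Bessel profile in Proposition~\ref{prop:Bessel}, and summing over $m$ gives
\[
\|D_k^\circ\uptheta_i(t)\|_{L^\infty}\lesssim i^{-1}e^{\sqrt{CT\log(i/k)}}e^{CT}\quad(i>k),
\]
because $\sum_m x^m/(m!(m-1)!)\lesssim e^{2\sqrt x}$.

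For~\eqref{eq:est-cov-th}, I would take $Y=\upvarphi(\uptheta_i(t))$ and $Y'=\upvarphi(\uptheta_j(t'))$ in~\eqref{eq:glauber-covariance}, using $\|D_k^\circ\upvarphi(\uptheta_i)\|\le\|\upvarphi'\|_\infty\|D_k^\circ\uptheta_i\|$ (again by coupling). Only $k\le j$ contribute. The term $k=j$ gives $i^{-1}e^{\sqrt{CT\log(i/j)}}$. The terms $k<j$ give $(ij)^{-1}\sum_{k<j}e^{\sqrt{CT\log(i/k)}}$. Using $\sqrt{a+b}\le\sqrt a+\sqrt b$ and $e^{\sqrt{CT\log(j/k)}}\le C_\upepsilon e^{CT/\upepsilon}(j/k)^{\upepsilon}$, the sum over $k$ is $\lesssim j\,e^{CT}$. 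This yields the claim after enlarging $C$.

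For~\eqref{eq:est-kap3-th}, I would center the variables and use~\eqref{eq:glauber-rep-covariance} once:
\[
\upkappa_{1,1,1}(Y,Y',Y'')=\Cov(Y,\mathring Y'\mathring Y'')=\sum_k\E\big[\mathcal T^\circ(D_k^\circ Y)\,D_k^\circ(\mathring Y'\mathring Y'')\big].
\]
The Leibniz-type formula $D_k^\circ(UV)=(D_k^\circ U)V+\E_k^\circ[U]D_k^\circ V$ is then applied to the product. The resulting terms are of the form $\E[\mathcal T^\circ(D_k^\circ Y)\,D_k^\circ Y'\,\mathring Y'']$. Since $\mathring Y''$ is centered, I would expand it once more with~\eqref{eq:glauber-rep-covariance}, writing $\E[Z\mathring Y'']=\sum_l\E[\mathcal T^\circ(D_l^\circ Z)D_l^\circ Y'']$ with $Z=\mathcal T^\circ(D_k^\circ Y)D_k^\circ Y'$ (minus its mean). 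This requires second derivatives $D_l^\circ D_k^\circ\uptheta_i$, with $\mathcal T^\circ$ commuting suitably with $D_l^\circ$ as in~\cite{MD-21}. I would bound these by differentiating the linearized equation once more. The source terms are now quadratic, of the form $\|\mathsf{w}''_\upbeta\|\,\upomega_{i,l'}|D_k\uptheta||D_l\uptheta|$, which is where the $W^{1,\infty}$ norm of $\upvarphi'$ enters. The same discrete Volterra--Hardy iteration should give $|D_l^\circ D_k^\circ\uptheta_i|\lesssim i^{-1}(k\vee l)^{-1}e^{\sqrt{CT\log(i/(k\wedge l))}}e^{CT}$. Summing over $k,l\le j$ with the ordering $i>j>k$ then produces $(ij)^{-1}e^{\sqrt{CT\log(i/k)}}e^{CT}$, where the dependence on $\uptheta_k(t'')$ supplies the factor measured through $\log(i/k)$.

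The main obstacle is this second-order step. The summations over two indices must not lose a factor $\log N$, which is the danger flagged by~\eqref{eq:min-requ-jabinsoler}. I would handle this as in the covariance step: trade each $e^{\sqrt{\log}}$ factor for an $\upepsilon$-power, so that the $1/l$ weights remain summable against $(l/k)^{-\upepsilon}$-type gains.
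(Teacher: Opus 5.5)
Your architecture is the paper's, step for step: first Glauber derivatives from the linearized dynamics and a discrete Volterra--Hardy iteration, the covariance bound via \eqref{eq:glauber-covariance}, and the second-derivative bound $|D_l^\circ D_k^\circ\uptheta_i|\lesssim i^{-1}(k\vee l)^{-1}e^{\sqrt{CT\log(i/(k\wedge l))}}e^{CT}$, which is \eqref{eq:est-Glaub-2nd}. Then come a third-order Poincar\'e inequality from two applications of \eqref{eq:glauber-rep-covariance} with $D_l^\circ\mathcal T^\circ=\mathcal T^\circ D_l^\circ$, and a final summation. However, two steps do not deliver the stated bounds as you wrote them.

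First, in Step~1 the reduced inequality $\delta_i\le\frac{CT}{i}\sum_{k\le l<i}\delta_l$ discards the nested time integrals. Iterating it gives exactly your $m$-th term $\frac{(CT)^m}{i}\frac{(\log(i/k))^{m-1}}{(m-1)!}$, and these sum to the power law $\frac{CT}{i}(i/k)^{CT}$, not to $i^{-1}e^{\sqrt{CT\log(i/k)}}$. The series $\sum_m x^m/(m!(m-1)!)$ you then invoke contains a factor $1/m!$ that your iterate does not have. This is not cosmetic. With a power law, once $2CT>1$ the $k$-sum in the covariance step is of order $(i/j)^{CT}j^{2CT}$ rather than $O(j)$, and you no longer recover \eqref{eq:est-cov-th}. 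You must iterate the time-integral inequality itself, so that the time simplex contributes $(Ct)^m/m!$ alongside the $(\log(i/k))^m/m!$ from the ordered index sums; this is the paper's $\frac{(Ct)^r}{r!}\sum_{k<j_r<\cdots<j_1<i}(j_1\cdots j_r)^{-1}$.

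Second, your treatment of the ``main obstacle'' goes the wrong way. Replacing $e^{\sqrt{CT\log x}}$ by $C_\upepsilon e^{CT/\upepsilon}x^\upepsilon$ is a loss, not a gain. It is harmless in the covariance step only because the $k$-sum there is a Ces\`aro average, $j^{-1}\sum_{k<j}(j/k)^\upepsilon\lesssim1$. In the cumulant, pair $D_a^\circ D_k^\circ\uptheta_i$, of size $(ia)^{-1}e^{\sqrt{CT\log(i/k)}}$, with $D_a^\circ\uptheta_j$, of size $j^{-1}e^{\sqrt{CT\log(j/a)}}$, over $k<a<j$. This produces the harmonic sum $\sum_{k<a<j}a^{-1}e^{\sqrt{CT\log(j/a)}}$. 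The $(a/k)^{-\upepsilon}$-type gain you invoke makes this converge only after pulling out the prefactor $(j/k)^{\upepsilon}$, since $(j/a)^\upepsilon=(j/k)^\upepsilon(a/k)^{-\upepsilon}$. You therefore end up with $\upepsilon^{-1}(j/k)^\upepsilon$, e.g.\ $N^\upepsilon$ when $k=O(1)$ and $j\sim N$, and this cannot be absorbed into $e^{\sqrt{CT\log(i/k)}}$. What works is to bound the sum by $\log(j/k)\,e^{\sqrt{CT\log(j/k)}}$ and absorb $\log(j/k)\le\log(i/k)$ into $e^{\sqrt{CT\log(i/k)}}$ by enlarging $C$, as at the end of the paper's Step~5.

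Two smaller slips. The exact Leibniz rule is $D_k^\circ(UV)=(D_k^\circ U)V+\E_k^\circ[U]\,D_k^\circ V-\E_k^\circ[(D_k^\circ U)(D_k^\circ V)]$; the term you dropped yields the diagonal sum $\sum_k\|D_k^\circ Y\|\|D_k^\circ Y'\|\|D_k^\circ Y''\|$, which is harmless here. Also, $\upvarphi''$ enters through the second-order chain rule \eqref{eq:chain-rule-Glaub-2nd} applied to $\upvarphi(\uptheta_i)$. The quadratic source in the equation for $D_l^\circ D_k^\circ\uptheta_i$ involves $\mathsf{w}_\upbeta'''$, not $\mathsf{w}_\upbeta''$, and is not where $\upvarphi''$ comes from.
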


\begin{proof}
We split the proof into five steps: we start by estimating the sensitivity of trajectories with respect to initial data in form of uniform bounds on Glauber derivatives, and we then conclude by reconstructing cumulants as sums of products of Glauber derivatives. Compared to the covariance, the third cumulant further requires bounds on second Glauber derivatives, which is postponed to the last two steps of the proof. 

\medskip
\step1 First Glauber derivatives: proof that for all $1\le i,k\le N$ and $t\ge0$, almost surely,
\begin{equation}\label{eq:est-Gl}
\left|D_k^\circ \uptheta_i(t)\right|\le C e^{C t}\mathds1_{k=i}+C i^{-1}e^{\sqrt{C t\log(i/k)}}e^{C t}\mathds1_{k<i}.
\end{equation}
Note that for a Lipschitz function $h$ we can bound
\begin{equation}
\left|D_k^\circ h\left(Y\right)\right|\le\|h'\|_{L^\infty(\T)}\left(\left|D_k^\circ Y\right|+\E\left[\left| D_k^\circ Y\right|\,\middle|\,(\uptheta_j^\circ)_{j:j\ne k}\right]\right),
\end{equation}
and thus, for any $p\ge1$,
\begin{equation}\label{eq:chain-rule-Glaub}
\|D_k^\circ h(Y)\|_{L^p(\Upomega)}\le2\|h'\|_{L^\infty(\T)}\|D_k^\circ Y\|_{L^p(\Upomega)}.
\end{equation}
Taking Glauber derivatives in the particle dynamics and using this latter estimate, we find
\[\frac{\mathrm d}{\mathrm dt}\|D_k^\circ\uptheta_i\|_{L^p(\Upomega)}\le2\|\mathsf{w}_{\upbeta}''\|_{L^\infty(\T)}\|D_k^\circ\uptheta_i\|_{L^p(\Upomega)}+2\|\mathsf{w}_{\upbeta}''\|_{L^\infty(\T)}\sum_{j=1}^{i-1}\upomega_{i,j}\|D_k^\circ\uptheta_j\|_{L^p(\Upomega)}.\]
Integrating in time and recalling $\upomega_{i,j}\lesssim i^{-1}$, this yields
\[\|D_k^\circ\uptheta_i(t)\|_{L^p(\Upomega)}\le e^{C t}\|D_k^\circ\uptheta_i^\circ\|_{L^p(\Upomega)}+ i^{-1}\sum_{j=1}^{i-1} \int_0^t C e^{C(t-s)} \|D_k^\circ\uptheta_j(s)\|_{L^p(\Upomega)}\,\mathrm d s.\]
As initial data are independent, we note that $D_k^\circ\uptheta_i^\circ=0$ for $k\ne i$, and the triangular structure of the dynamics further ensures $D_k^\circ\uptheta_i(t)=0$ for all $k>i$ and $t\ge0$. The above then becomes
\[\|D_k^\circ\uptheta_i(t)\|_{L^p(\Upomega)}\le Ce^{C t}\mathds1_{k=i}+ i^{-1}\sum_{j=k}^{i-1} \int_0^t C e^{C(t-s)} \|D_k^\circ\uptheta_j(s)\|_{L^p(\Upomega)}\,\mathrm d s.\]
Iterating this estimate, we get for $k<i$,
\[\|D_k^\circ\uptheta_i(t)\|_{L^p(\Upomega)}\le i^{-1} C e^{C t}\sum_{r=0}^{i-k-1}\frac{(C t)^{r}}{r!}\sum_{k<j_r<\ldots<j_1<i}(j_1\ldots j_r)^{-1},\]
and thus, after straightforward calculation,
\[\|D_k^\circ\uptheta_i(t)\|_{L^p(\Upomega)}\le i^{-1} C e^{C t}\sum_{r=0}^{i-k-1}\frac{(C t\log(i/k))^r}{r!^2}\le i^{-1} C e^{C t}e^{\sqrt{C t\log(i/k)}},\]
that is, \eqref{eq:est-Gl}.

\medskip
\step2 Proof of~\eqref{eq:est-cov-th}.\\
Appealing to the Glauber covariance estimate~\eqref{eq:glauber-covariance}, applying it to trajectories, and recalling the chain rule~\eqref{eq:chain-rule-Glaub}, we get
\begin{equation}\label{eq:est-cov-phi}
\left|\Cov\left(\upvarphi\left(\uptheta_i(t)\right),\upvarphi\left(\uptheta_j(t')\right)\right)\right|\,\lesssim\,\|\upvarphi'\|_{L^\infty(\T)}^2\sum_{k=1}^N\left\|D_k^\circ \uptheta_i(t)\right\|_{L^2(\Upomega)}\left\|D_k^\circ \uptheta_j(t')\right\|_{L^2(\Upomega)}.
\end{equation}
Inserting~\eqref{eq:est-Gl}, for $i> j$, we obtain
\begin{multline*}
\left|\Cov\left(\upvarphi\left(\uptheta_i(t)\right),\upvarphi\left(\uptheta_j(t')\right)\right)\right|\,\lesssim\,
i^{-1}e^{\sqrt{C t\log(i/j)}}e^{C(t+ t')}\|\upvarphi'\|_{L^\infty(\T)}^2\\
+(ij)^{-1}e^{C(t+t')}\|\upvarphi'\|_{L^\infty(\T)}^2\sum_{k=1}^{j-1}e^{\sqrt{C t\log(i/k)}}e^{\sqrt{C t'\log(j/k)}},
\end{multline*}
and the claim~\eqref{eq:est-cov-th} follows after a straightforward estimate of the last sum.

\medskip
\step3
Second Glauber derivatives: proof that for all $1\le k<j<i\le N$ and $t\ge0$,
\begin{equation}\label{eq:est-Glaub-2nd}
|D_j^\circ D_k^\circ\uptheta_i(t)|\,\le\,C (ij)^{-1} e^{\sqrt{C t\log(i/k)}}e^{C t},
\end{equation}
Arguing similarly as for~\eqref{eq:chain-rule-Glaub}, we find the following second-order chain rule: for a smooth function $h$, for all $j\ne k$ and $2\le p\le\infty$,
\begin{equation}\label{eq:chain-rule-Glaub-2nd}
\|D_j^\circ D_k^\circ h(Y)\|_{L^p(\Upomega)}\le 2\|h'\|_{L^\infty(\T)}\|D_j^\circ D_k^\circ Y\|_{L^p(\Upomega)}+4\|h''\|_{L^\infty(\T)}\|D_j^\circ Y\|_{L^p(\Upomega)}\|D_k^\circ Y\|_{L^p(\Upomega)}.
\end{equation}
For $k<j<i$, taking second Glauber derivatives in the particle dynamics, using this estimate, and recalling $\upomega_{i,j}\lesssim i^{-1}$, we find
\begin{multline*}
\frac{\mathrm d}{\mathrm dt}\|D_j^\circ D_k^\circ\uptheta_i \|_{L^p(\Upomega)}
\le C \|D_j^\circ D_k^\circ \uptheta_i\|_{L^p(\Upomega)}
+C i^{-1}\sum_{l=1}^{i-1}\|D_j^\circ D_k^\circ\uptheta_l\|_{L^p(\Upomega)}\\
+C i^{-1}\sum_{l=1}^{i-1}\|D_j^\circ (\uptheta_i-\uptheta_l)\|_{L^p(\Upomega)}\|D_k^\circ (\uptheta_i-\uptheta_l)\|_{L^p(\Upomega)}.
\end{multline*}
Using~\eqref{eq:est-Gl} to estimate the last sum, we get
\begin{multline*}
\frac{\mathrm d}{\mathrm dt}\|D_j^\circ D_k^\circ\uptheta_i \|_{L^p(\Upomega)}
\le C \|D_j^\circ D_k^\circ \uptheta_i\|_{L^p(\Upomega)}
+C i^{-1}\sum_{l=1}^{i-1}\|D_j^\circ D_k^\circ\uptheta_l\|_{L^p(\Upomega)}\\
+C i^{-2}e^{\sqrt{Ct\log(i/k)}}e^{C t}
+C (ij)^{-1}e^{\sqrt{Ct\log(j/k)}}e^{C t}
\end{multline*}
and thus, after time integration,
\begin{multline*}
\|D_j^\circ D_k^\circ\uptheta_i(t) \|_{L^p(\Upomega)}
\le
C i^{-2}e^{\sqrt{Ct\log(i/k)}}e^{C t}
+C (ij)^{-1} e^{\sqrt{Ct\log(j/k)}}e^{C t}\\
+i^{-1}\sum_{l=1}^{i-1}\int_0^t C e^{C (t-s)}\|D_j^\circ D_k^\circ\uptheta_l(s)\|_{L^p(\Upomega)}\,\mathrm d s.
\end{multline*}
Note that the first term is bounded by the second one by a simple monotonicity argument.
Further using~\eqref{eq:est-Gl} in the form
\[\|D_j^\circ D_k^\circ\uptheta_l(t)\|_{L^p(\Upomega)}\le\left\{\begin{array}{lll}
j^{-1}e^{\sqrt{C t\log(j/k)}}e^{C t}&:&l=j> k,\\
0&:&l=k< j,\\
\end{array}\right.\]
the above can be reorganized more simply as
\begin{equation*}
\|D_j^\circ D_k^\circ\uptheta_i(t) \|_{L^p(\Upomega)}
\le
C (ij)^{-1} e^{\sqrt{Ct\log(j/k)}}e^{C t}
+i^{-1}\sum_{j< l<i}\int_0^t C e^{C (t-s)}\|D_j^\circ D_k^\circ\uptheta_l(s)\|_{L^p(\Upomega)}\,\mathrm d s.
\end{equation*}
Now iterating this estimate, and evaluating the resulting series similarly as in Step~1, we find
\begin{equation*}
\|D_j^\circ D_k^\circ\uptheta_i(t) \|_{L^p(\Upomega)}
\le
C (ij)^{-1} e^{\sqrt{C t\log(j/k)}} e^{\sqrt{C t\log(i/j)}}e^{C t},
\end{equation*}
which is equivalent to the claim~\eqref{eq:est-Glaub-2nd}.

\medskip
\step4 Third-order Poincar\'e inequality for third joint cumulants: proof that for all random variables~$Y,Y',Y''\in L^3(\Upomega)$ we have
\begin{multline}\label{eq:estim-kappa3-Gl}
|\upkappa_{1,1,1}[Y,Y',Y'']|
\,\le\,\sum_{j,k=1}^N \left(\|D_j^\circ D_k^\circ Y\|_{L^3(\Upomega)}\|D_j^\circ Y'\|_{L^3(\Upomega)}\|D_k^\circ Y''\|_{L^3(\Upomega)}\right.\\[-4mm]
\left.+\|D_j^\circ Y\|_{L^3(\Upomega)}\|D_j^\circ D_k^\circ Y'\|_{L^3(\Upomega)}\|D_k^\circ Y''\|_{L^3(\Upomega)}\right.\\
\left.+\|D_j^\circ Y\|_{L^3(\Upomega)}\|D_k^\circ Y'\|_{L^3(\Upomega)}\|D_j^\circ D_k^\circ Y''\|_{L^3(\Upomega)}\right).
\end{multline}
Although such Glauber estimates for {\it joint} cumulants were not covered in~\cite{MD-21}, they are easily deduced with the same toolkit developed in~\cite{MD-21}, as we briefly explain.
Without loss of generality, we may assume that $Y,Y',Y''$ have vanishing expectation. Using~\eqref{eq:glauber-rep-covariance},
we can write
\begin{equation*}
\upkappa_{1,1,1}[Y,Y',Y'']
\,=\,\E\left[YY'Y''\right]
\,=\,\Cov(YY',Y'')
\,=\,\sum_{j=1}^N\E\left[D_j^\circ(YY')\mathcal T^\circ(D_j^\circ Y'')\right].
\end{equation*}
Now note that we have the approximate chain rule
\begin{equation*}
\left|D_j^\circ(YY')
-Y(D_j^\circ Y')
-Y'(D_j^\circ Y)\right|
\,\le\,|D_j^\circ Y||D_j^\circ Y'|+\E\left[|D_j^\circ Y||D_j^\circ Y'|\,\middle|\,(\uptheta_l^\circ)_{l:l\ne j}\right].
\end{equation*}
Inserting this into the above and recalling the boundedness of the inverse Glauber Laplacian, cf.~\eqref{eq:glauber-rep-covariance},
we are led to
\begin{multline}\label{eq:est-kappa3-pre}
|\upkappa_{1,1,1}[Y,Y',Y'']|
\,\le\,\sum_{j=1}^N\left(\left|\E\left[Y(D_j^\circ Y')\mathcal T^\circ(D_j^\circ Y'')\right]\right|+\left|\E\left[Y'(D_j^\circ Y)\mathcal T^\circ(D_j^\circ Y'')\right]\right|\right)\\[-3mm]
+\sum_{j=1}^N\|D_j^\circ Y\|_{L^3(\Upomega)}\|D_j^\circ Y'\|_{L^3(\Upomega)}\|D_j^\circ Y''\|_{L^3(\Upomega)}.
\end{multline}
It remains to estimate the first two terms.
Using~\eqref{eq:glauber-rep-covariance} again,
we can write
\begin{equation*}
\E\left[Y(D_j^\circ Y')\mathcal T^\circ(D_j^\circ Y'')\right]
\,=\,\Cov\left(Y,(D_j^\circ Y')\mathcal T^\circ(D_j^\circ Y'')\right)
\,=\,\sum_{k=1}^N \E\left[(D_k^\circ Y)\mathcal T^\circ D_k^\circ\left((D_j^\circ Y')\mathcal T^\circ(D_j^\circ Y'')\right)\right],
\end{equation*}
and thus, by the approximate chain rule and the boundedness of the inverse Glauber Laplacian,
\begin{multline*}
\left|\E\left[Y(D_j^\circ Y')\mathcal T^\circ(D_j^\circ Y'')\right]\right|
\,\lesssim\,\sum_{k=1}^N \left(\|D_k^\circ Y\|_{L^3(\Upomega)}\|D_k^\circ D_j^\circ Y'\|_{L^3(\Upomega)}\|D_j^\circ Y''\|_{L^3(\Upomega)}\right.\\[-3mm]
\left.+\|D_k^\circ Y\|_{L^3(\Upomega)}\|D_j^\circ Y'\|_{L^3(\Upomega)}\|D_k^\circ D_j^\circ Y''\|_{L^3(\Upomega)}\right).
\end{multline*}
Inserting this into~\eqref{eq:est-kappa3-pre}, and noting that for $j=k$ we have $D_j^\circ D_k^\circ=D_j^\circ$, the claim~\eqref{eq:estim-kappa3-Gl} follows.

\medskip
\step5 Proof of~\eqref{eq:est-kap3-th}.\\
Applying~\eqref{eq:estim-kappa3-Gl} to trajectories, recalling the chain rules~\eqref{eq:chain-rule-Glaub} and~\eqref{eq:chain-rule-Glaub-2nd}, and inserting the Glauber estimates~\eqref{eq:est-Gl} and~\eqref{eq:est-Glaub-2nd}, we get for all $k<j<i$ and $t,t',t''\ge0$,
\begin{multline*}
|\upkappa_{1,1,1}[\upvarphi(\uptheta_i(t)),\upvarphi(\uptheta_j(t')),\upvarphi(\uptheta_k(t''))]|
\,\le\,
\|\upvarphi'\|_{L^\infty(\T)}^2\|\upvarphi'\|_{W^{1,\infty}(\T)}(ij)^{-1}e^{\sqrt{C T\log(i/k)}}e^{C T}\\
\times\left(1
+\sum_{l\,:\,k<l<j}l^{-1}
+k^{-1}\sum_{l\,:\,l<k}e^{\sqrt{C T\log(k/l)}}
+k^{-1}\sum_{l\,:\,l<j}l^{-1}\sum_{m\,:\,m<k\wedge l}e^{\sqrt{C T\log(k/m)}}
\right)
\end{multline*}
where we have set for abbreviation $T:=t+t'+t''$. The sums in the last parenthesis can be bounded by $\log(j/k)\le\log(i/k)$ and the claim~\eqref{eq:est-kap3-th} follows.
\end{proof}

\section{Mean-field limit}\label{sec:mfl-proof}

This section proves Theorem~\ref{th:mf-gpt}.
We split the argument into four steps. The qualitative mean-field limit is proven in Step~1 and follows from standard compactness arguments together with a uniqueness result for the mean-field equation~\eqref{eq:mfl-lambda}. The error estimates are more delicate and occupy the remaining three steps of the proof. They are based on the covariance estimate of Section~\ref{sec:cumulants} and on some stability analysis for the mean-field equation.

Next to the empirical measure~\eqref{eq:emp-meas}, we shall also need to work with the marginal distribution of the individual particles:
for $t\ge0$ and $\upsigma\in(0,1]$, let $f_N(t,\upsigma,\cdot)\in \Pc(\T)$ denote the probability density of $\uptheta_{\lceil N\upsigma\rceil}(t)$,
\begin{equation}\label{eq:marg-fN}
\int_\T\uppsi(\uptheta) f_N(t,\upsigma,\mathrm d\uptheta) =\E\left[\uppsi(\uptheta_{\lceil N\upsigma\rceil}(t))\right],\qquad\uppsi\in C^\infty(\T).
\end{equation}

\begin{proof}[Proof of Theorem~\ref{th:mf-gpt}]

We proceed in four steps.

\medskip
\step1~Qualitative mean-field limit.\\
In terms of the empirical measure~\eqref{eq:emp-meas} and the kernel $\mathsf{K}_N$ defined in~\eqref{eq:def-KN}, the particle dynamics~\eqref{eq:gpt-dyn-alibi} yields in the weak sense,
\[
\partial_t\upmu_N(t,\upsigma,\uptheta)
=
-\partial_\uptheta\left(\upmu_N(t,\upsigma,\uptheta)\int_0^\upsigma\int_\T \mathsf{K}_N(\upsigma,\upsigma')\mathsf{w}_{\upbeta}'(\uptheta-\uptheta')\,\upmu_N(t,\mathrm d\upsigma',\mathrm d\uptheta')\right).
\]
By weak compactness, up to a subsequence, we have $\upmu_N(t)\overset*\rightharpoonup f(t)$ for all $t\ge0$. Passing to the limit in the weak formulation of the above equation, using the graphon convergence $\mathsf{K}_N\to \mathsf{k}_{\uplambda}$, cf.~\eqref{eq:estim-KNk}, we deduce that the limit point $f$ is necessarily a weak solution of~\eqref{eq:mfl-lambda}. In addition, noting that $\int_\T \upmu_N(t,\upsigma,\mathrm d\uptheta)=\frac1N\sum_{i=1}^N\updelta_{i/N}(\upsigma)\overset*\rightharpoonup1$, we deduce $f\in L^\infty_\loc(\mathbb{R}_{\geqslant0}\times[0,1];\Pc(\T))$.

It remains to show that a weak solution of~\eqref{eq:mfl-lambda} in $L^\infty_\loc(\mathbb{R}_{\geqslant0}\times[0,1];\Pc(\T))$ is necessarily unique. Let~$f,f'$ be two such solutions with identical initial data. For almost all $\upsigma$, as $f(\cdot,\upsigma,\cdot)$ and $f'(\cdot,\upsigma,\cdot)$ satisfy transport equations with velocity fields $V_{\upsigma}$ and $V_{\upsigma}'$, which are given by
\[V_{\upsigma}(t,\uptheta)=-\int_0^\upsigma \mathsf{k}_{\uplambda}(\upsigma,\uptau)\mathsf{w}_{\upbeta}'\ast_\uptheta f(t,\uptau,\uptheta)\,\mathrm d\uptau\]
and correspondingly for $V_{\upsigma}'$.
A standard calculation yields
\[\partial_t^+W_1(f(t,\upsigma),f'(t,\upsigma))\le\|\nabla V_{\upsigma}'(t)\|_{L^\infty(\T)}W_1(f(t,\upsigma),f'(t,\upsigma))+\int_\T|V_{\upsigma}(t)-V_{\upsigma}'(t)|f(t,\upsigma),\]
where $W_1$ denotes the usual $1$-Wasserstein distance on $\Pc(\T)$.
Inserting the definition of $V_{\upsigma},V_{\upsigma}'$, we deduce
\begin{equation*}
{\partial_t^+W_1(f(t,\upsigma),f'(t,\upsigma))}
\le\|\mathsf{w}_{\upbeta}''\|_{L^\infty(\T)}\left(W_1(f(t,\upsigma),f'(t,\upsigma))+\int_0^\upsigma \mathsf{k}_{\uplambda}(\upsigma,\uptau)W_1(f(t,\uptau),f'(t,\uptau))\,\mathrm d\uptau\right),
\end{equation*}
and thus,
\[\partial_t^+\left(\underset{\upsigma\in[0,1]}\supess\, W_1(f(t,\upsigma),f'(t,\upsigma))\right)\,\le\,2\|\mathsf{w}_{\upbeta}''\|_{L^\infty(\T)}\,\underset{\upsigma\in[0,1]}\supess\, W_1(f(t,\upsigma),f'(t,\upsigma)).\]
By Gr\"onwall's inequality, this entails $f=f'$, which concludes the proof of~(i).

\medskip
\step2~Approximate equation for marginal distribution: for $f_N$ defined in~\eqref{eq:marg-fN}, we have for all $t\ge0$, $\upsigma\in(0,1]$, and $n\in\Z$,
\begin{equation}\label{eq:approx-hatfN}
\left|\partial_t\hat f_N(t,\upsigma,n)
-\sum_{\upxi\in\Z}n\upxi\widehat{\mathsf{w}}_{\upbeta}(\upxi) \hat f_N(t,\upsigma,n-\upxi)\int_0^\upsigma \mathsf{k}_{\uplambda}(\upsigma,\uptau)\hat f_N(t,\uptau,\upxi)\,\mathrm d\uptau\right|
\le \frac{C}{N\upsigma+1} e^{C t}\langle n\rangle^2.
\end{equation}
With the short-hand notation~\eqref{eq:shortdef-omega}, the particle dynamics~\eqref{eq:gpt-dyn-alibi} yields
\[
\frac{\mathrm d}{\mathrm dt}\E\left[e^{-in\uptheta_j}\right]
=
-in\sum_{k=1}^{j-1}\upomega_{j,k}\E\left[e^{-in\uptheta_j}\mathsf{w}_{\upbeta}'(\uptheta_j-\uptheta_k)\right],
\]
and thus, using Fourier decomposition for the interaction $\mathsf{w}_{\upbeta}'$,
\[
\frac{\mathrm d}{\mathrm dt}\E\left[e^{-in\uptheta_j}\right]
=
\sum_{\upxi\in\Z}n\upxi\widehat{\mathsf{w}}_{\upbeta}(\upxi)\sum_{k=1}^{j-1}\upomega_{j,k}\E\left[e^{-i(n-\upxi)\uptheta_j}e^{-i\upxi\uptheta_k}\right].
\]
Using the smallness of correlations obtained in Lemma~\ref{lem:correl}, and recalling $\upomega_{j,k}\lesssim j^{-1}$, we deduce
\begin{multline*}
\left|\frac{\mathrm d}{\mathrm dt}\E\left[e^{-in\uptheta_j}\right]
-\sum_{\upxi\in\Z}n\upxi\widehat{\mathsf{w}}_{\upbeta}(\upxi)\sum_{k=1}^{j-1}\upomega_{j,k}\E\left[e^{-i(n-\upxi)\uptheta_j}\right]\,\E\left[e^{-i\upxi\uptheta_k}\right]\right|\\
\le C j^{-1}\sum_{k=1}^{j-1}\upomega_{j,k}e^{\sqrt{C t\log(j/k)}}e^{C t}\langle n\rangle^2
\le C j^{-1}e^{C t}\langle n\rangle^2.
\end{multline*}
For $\upsigma\in(0,1]$ and $j=\lceil N\upsigma\rceil$, in terms of Fourier modes $\hat f_N(t,\upsigma,n)=\E\big[e^{-in\uptheta_{\lceil N\upsigma\rceil}(t)}\big]$ and of the kernel~$\mathsf{K}_N$ defined in~\eqref{eq:def-KN}, this reads
\[
\left|\partial_t\hat f_N(t,\upsigma,n)
-\sum_{\upxi\in\Z}n\upxi\widehat{\mathsf{w}}_{\upbeta}(\upxi)\,\hat f_N(t,\upsigma,n-\upxi)\int_0^1\mathsf{K}_N(\upsigma,\upsigma')\hat f_N(t,\upsigma',\upxi)\mathrm d\upsigma'\right|
\le \frac{C}{N\upsigma+1}e^{C t}\langle n\rangle^2.
\]
Using the graphon convergence~\eqref{eq:estim-KNk} together with the trivial bound $|\hat f_N|\le1$,
this concludes the proof of \eqref{eq:approx-hatfN}.

\medskip
\step3~Stability for limit equation.\\
Based on the approximate equation~\eqref{eq:approx-hatfN}, we aim to deduce an error estimate between the marginal distribution $f_N$ and the weak solution $f$ of the limit equation~\eqref{eq:mfl-lambda}. 
First note that the latter reads as follows in Fourier space,
\[
\partial_t\hat f(t,\upsigma,n)-\sum_{\upxi\in\Z}n\upxi \widehat{\mathsf{w}}_{\upbeta}(\upxi)\hat f(t,\upsigma,n-\upxi)\int_0^\upsigma \mathsf{k}_{\uplambda}(\upsigma,\uptau)\hat f(t,\uptau,\upxi)\,\mathrm d\uptau=0.
\]
Hence, comparing this with~\eqref{eq:approx-hatfN}, we find for the error $g_N:=f_N-f$,
\begin{multline}\label{eq:err-gN}
\partial_t\hat g_N(t,\upsigma,n)
-\sum_{\upxi\in\Z}n\upxi\widehat{\mathsf{w}}_{\upbeta}(\upxi) \hat f(t,\upsigma,n-\upxi)\int_0^\upsigma \mathsf{k}_{\uplambda}(\upsigma,\uptau)\hat g_N(t,\uptau,\upxi)\,\mathrm d\uptau\\
-\sum_{\upxi\in\Z}n\upxi\widehat{\mathsf{w}}_{\upbeta}(\upxi) \hat g_N(t,\upsigma,n-\upxi)\int_0^\upsigma \mathsf{k}_{\uplambda}(\upsigma,\uptau)\hat f_N(t,\uptau,\upxi)\,\mathrm d\uptau
=\hat r_N(t,\upsigma,n),
\end{multline}
where the remainder term $r_N$ satisfies pointwise, for all $t\ge0$, $\upsigma\in(0,1]$, and $n\in \Z$,
\begin{equation}\label{eq:bound-rN}
|\hat r_N(t,\upsigma,n)|\,\le\,\frac{C}{N\upsigma+1} e^{C t}\langle n\rangle^2.
\end{equation}
Due to the loss of a derivative in $\uptheta$ (that is, of a factor $n$) in~\eqref{eq:err-gN}, we cannot prove an $L^\infty$ stability estimate for Fourier modes $\hat g_N(t,\upsigma,n)$, and we shall rather work in a weighted $L^2$ setting.
Testing~\eqref{eq:err-gN} with $\langle n\rangle^{-\upalpha}\hat g_N(t,\upsigma,n)$, for some exponent $\upalpha\ge0$, we find
\begin{multline*}
\partial_t\sum_{n\in\Z}\langle n\rangle^{-\upalpha}|\hat g_N(t,\upsigma,n)|^2
\le2\left|\sum_{n\in\Z}\langle n\rangle^{-\upalpha}\hat g_N(t,\upsigma,-n)\hat r_N(t,\upsigma,n)\right|\\
-2\left|\sum_{n,\upxi\in\Z}\langle n\rangle^{-\upalpha}n\upxi\widehat{\mathsf{w}}_{\upbeta}(\upxi) \hat g_N(t,\upsigma,-n) \hat f(t,\upsigma,n-\upxi)\int_0^\upsigma \mathsf{k}_{\uplambda}(\upsigma,\uptau)\hat g_N(t,\uptau,\upxi)\,\mathrm d\uptau\right|\\
-2\left|\sum_{n,\upxi\in\Z}\langle n\rangle^{-\upalpha}n\upxi\widehat{\mathsf{w}}_{\upbeta}(\upxi) \hat g_N(t,\upsigma,-n)\hat g_N(t,\upsigma,n-\upxi)\int_0^\upsigma \mathsf{k}_{\uplambda}(\upsigma,\uptau)\hat f_N(t,\uptau,\upxi)\,\mathrm d\uptau\right|.
\end{multline*}
The sum over $n$ in the last term can be reorganized as follows, using the symmetry $n\leftrightarrow\upxi-n$,
\begin{eqnarray}
\lefteqn{\sum_{n\in\Z}\langle n\rangle^{-\upalpha}n \,\hat g_N(t,\upsigma,-n)\,\hat g_N(t,\upsigma,n-\upxi)}\nonumber\\
&=&\frac12\sum_{n\in\Z}\left(\langle n-\upxi\rangle^{-\upalpha}\upxi+\left(\langle n\rangle^{-\upalpha} - \langle n-\upxi\rangle^{-\upalpha}\right)n\right)\,\hat g_N(t,\upsigma,-n)\,\hat g_N(t,\upsigma,n-\upxi)\nonumber\\
&\lesssim&\langle\upxi\rangle^{\upalpha+2}\sum_{n\in\Z}\langle n\rangle^{-\upalpha}|\hat g_N(t,\upsigma,n)||\hat g_N(t,\upsigma,n-\upxi)|.\label{eq:estim-nxi}
\end{eqnarray}
Further using $\mathsf{k}_{\uplambda}(\upsigma,\uptau)\lesssim \upsigma^{-1}$, the above then becomes
\begin{multline*}
\partial_t\left(\sum_{n\in\Z}\langle n\rangle^{-\upalpha}|\hat g_N(t,\upsigma,n)|^2\right)^{\frac12}
\lesssim\left(\sum_{n\in\Z}\langle n\rangle^{-\upalpha}|\hat r_N(t,\upsigma,n)|^2\right)^{\frac12}\\
+\left(\sum_{n\in\Z}\langle n\rangle^{2-\upalpha}|\hat f(t,\upsigma,n)|^2\right)^{\frac12}\frac1\upsigma\int_0^\upsigma \left(\sum_{n\in\Z}\langle n\rangle^{-\upalpha}|\hat g_N(t,\uptau,n)|^2\right)^{\frac12}\,\mathrm d\uptau\\
+\left(\sum_{n\in\Z}\langle n\rangle^{-\upalpha}|\hat g_N(t,\upsigma,n)|^2\right)^{\frac12}\frac1\upsigma\int_0^\upsigma \left(\sum_{n\in\Z}\langle n\rangle^{-\upalpha}|\hat f_N(t,\uptau,n)|^2\right)^{\frac12}\,\mathrm d\uptau.
\end{multline*}
Using the pointwise bound~\eqref{eq:bound-rN} on the remainder $r_N$, together with the trivial bounds $|\hat f|,|\hat f_N|\le1$, and choosing $\upalpha>5$, we are led to
\begin{multline}\label{eq:estim-gN-pre}
\partial_t\left(\sum_{n\in\Z}\langle n\rangle^{-\upalpha}|\hat g_N(t,\upsigma,n)|^2\right)^{\frac12}\\
\lesssim\frac1{N\upsigma+1}e^{C t}
+\left(\sum_{n\in\Z}\langle n\rangle^{-\upalpha}|\hat g_N(t,\upsigma,n)|^2\right)^{\frac12}
+\frac1\upsigma\int_0^\upsigma \left(\sum_{n\in\Z}\langle n\rangle^{-\upalpha}|\hat g_N(t,\uptau,n)|^2\right)^{\frac12}\,\mathrm d\uptau.
\end{multline}
Recall the Hardy inequality: for every $h\in L^p(0,1)$ and $1<p<\infty$,
\[
\left(\int_0^1\left|\frac{1}{\upsigma}\int_0^\upsigma h(\uptau)\,\mathrm d\uptau\right|^p\,\mathrm d\upsigma\right)^{\frac1p}
\le \frac{p}{p-1}\left(\int_0^1\left|h(\upsigma)\right|^p\,\mathrm d\upsigma\right)^{\frac1p}.
\]
Using this, we get
\begin{equation*}
\partial_t\left\|\left(\sum_{n\in\Z}\langle n\rangle^{-\upalpha}|\hat g_N(t,\cdot,n)|^2\right)^{\frac12}\right\|_{L^p(0,1)}
\lesssim N^{-\frac1p}e^{C t}
+\left\|\left(\sum_{n\in\Z}\langle n\rangle^{-\upalpha}|\hat g_N(t,\cdot,n)|^2\right)^{\frac12}\right\|_{L^p(0,1)},
\end{equation*}
and thus, by Gr\"onwall's inequality, recalling the initial assumption~\eqref{eq:init-conv} and choosing $\upalpha>2\upgamma+1$,
\begin{equation}\label{eq:estim-gN-res}
\left\|\left(\sum_{n\in\Z}\langle n\rangle^{-\upalpha}|\hat g_N(t,\cdot,n)|^2\right)^{\frac12}\right\|_{L^p(0,1)}
\lesssim N^{-\updelta\wedge\frac1p}\,e^{C t}.
\end{equation}

\medskip
\step4~Conclusion: proof of error estimates.\\
We start by noticing the link between the empirical measure $\upmu_N$ and the marginal distribution $f_N$.
On the one hand, we note that $f_N$ approximates the expectation of the empirical measure: by definition of $\upmu_N$ and $f_N$, we find for any $\upvarphi\in C^\infty((0,1)\times\T)$,
\[\E\left[\int_{(0,1]\times\T}\upvarphi\upmu_N(t)\right]-\int_{(0,1]\times\T}\upvarphi f_N(t)=\int_0^1\E\left[\upvarphi\left(\frac{\lceil N\upsigma\rceil}{N},\uptheta_{\lceil N\upsigma\rceil}(t)\right)-\upvarphi\left(\upsigma,\uptheta_{\lceil N\upsigma\rceil}(t)\right)\right]\,\mathrm d\upsigma,\]
and thus,
\[\left|\E\left[\int_{(0,1]\times\T}\upvarphi\upmu_N(t)\right]-\int_{(0,1]\times\T}\upvarphi f_N(t)\right|\lesssim N^{-1}\|\partial_\upsigma\upvarphi\|_{L^\infty((0,1)\times\T)}.\]
On the other hand, the variance of the empirical measure can be expanded as
\begin{multline*}
\Var\left[\int_{(0,1]\times\T}\upvarphi\upmu_N(t)\right]\\
=2N^{-2}\sum_{i>j}^N\Cov\left(\upvarphi\left(\frac iN,\uptheta_i(t)\right),\upvarphi\left(\frac jN,\uptheta_j(t)\right)\right)+N^{-2}\sum_{i=1}^N\Var\left[\upvarphi\left(\frac iN,\uptheta_i(t)\right)\right],
\end{multline*}
and thus, by the smallness of correlations obtained in Lemma~\ref{lem:correl},
\begin{eqnarray*}
\Var\left[\int_{(0,1]\times\T}\upvarphi\upmu_N(t)\right]
&\le&C N^{-2}\sum_{i> j}^Ni^{-1}e^{\sqrt{C t\log(i/j)}}e^{C t}\|\upvarphi\|_{L^\infty((0,1];W^{1,\infty}(\T))}^2+N^{-1}\|\upvarphi\|_{L^\infty((0,1]\times\T)}^2\\
&\le&C N^{-1}e^{C t}\|\upvarphi\|_{L^\infty((0,1];W^{1,\infty}(\T))}^2.
\end{eqnarray*}
Combining these two estimates, we deduce
\begin{equation*}
\E\left[\left|\int_{(0,1]\times\T}\upvarphi(\upmu_N(t)-f(t))\right|^2\right]\\
\lesssim
\left|\int_{(0,1)\times\T}\upvarphi(f_N(t)-f(t))\right|^2
+N^{-1}e^{C t}\|\upvarphi\|_{W^{1,\infty}((0,1)\times\T)}^2.
\end{equation*}
Finally, combining this with the bound~\eqref{eq:estim-gN-res} on $g_N=f_N-f$, choosing e.g.\@ $p=2$, the conclusion~\eqref{eq:conv-rate-muNf} follows.
\end{proof}

\begin{rem}[Convergence of marginal distribution]\label{rem:marginal-convergence}
For future reference, we show that the error bound~\eqref{eq:estim-gN-res} for $g_N=f_N-f$ can be upgraded to a pointwise estimate.
Indeed, using~\eqref{eq:estim-gN-res} to bound the last right-hand side term in~\eqref{eq:estim-gN-pre}, for $\upalpha>(2\upgamma)\vee4+1$ and $1<p<\infty$, we find
\begin{eqnarray*}
\lefteqn{\partial_t\left(\sum_{n\in\Z}\langle n\rangle^{-\upalpha}|\hat g_N(t,\upsigma,n)|^2\right)^{\frac12}}\\
&\lesssim&\frac1{N\upsigma+1}e^{C t}
+\left(\sum_{n\in\Z}\langle n\rangle^{-\upalpha}|\hat g_N(t,\upsigma,n)|^2\right)^{\frac12}
+\upsigma^{-\frac1p}\left\|\left(\sum_{n\in\Z}\langle n\rangle^{-\upalpha}|\hat g_N(t,\cdot,n)|^2\right)^{\frac12}\right\|_{L^p((0,1))}\\
&\lesssim&
N^{-\updelta\wedge\frac1p}\upsigma^{-\frac1p}e^{C t}
+\left(\sum_{n\in\Z}\langle n\rangle^{-\upalpha}|\hat g_N(t,\upsigma,n)|^2\right)^{\frac12},
\end{eqnarray*}
and thus, by Gr\"onwall's inequality with the initial assumption~\eqref{eq:init-conv},
\begin{eqnarray*}
\left(\sum_{n\in\Z}\langle n\rangle^{-\upalpha}|\hat g_N(t,\upsigma,n)|^2\right)^{\frac12}
\lesssim N^{-\updelta\wedge\frac1p}\upsigma^{-\frac1p}e^{C t}.
\end{eqnarray*}
Hence, for any $\upzeta\le \updelta$ with $\upzeta<1$, we have for all $t\ge0$, $\upsigma\in(0,1]$, and $n\in\Z$,
\begin{equation}\label{eq:estim-gN-res-re}
|\hat f_N(t,\upsigma,n)-\hat f(t,\upsigma,n)|\lesssim (N\upsigma)^{-\upzeta}e^{C t}\langle n\rangle^C.
\end{equation}
\end{rem}

\section{Characterization of correlations}\label{sec:corr-proof}

This section proves Theorem~\ref{th:lost}. The argument proceeds by using the cumulant estimates of Section~\ref{sec:cumulants} to derive approximate closed equations for autocorrelations and cross-correlations. The conclusion then follows from a suitable stability analysis for the latter, similarly as in Step~3 of the proof of Therem~\ref{th:mf-gpt}.

\begin{proof}[Proof of Theorem~\ref{th:lost}]
We split the argument into three steps.
\medskip

\step1~Approximate equations for time correlations:
\begin{enumerate}[---]
\item autocorrelations: for all $t\ge0$, $\upsigma\in(0,1]$, and $n\in\Z$, 
\begin{multline}\label{eq:appr-autocor}
\qquad\left|\partial_t\hat A_\upvarphi^N(t,\upsigma,n)
-\sum_{\upxi\in\Z} n\upxi\widehat{\mathsf{w}}_{\upbeta}(\upxi)\,\hat A_\upvarphi^N(t,\upsigma,n-\upxi)\int_0^\upsigma \mathsf{k}_{\uplambda}(\upsigma,\uptau)\hat f_N(t,\uptau,\upxi)\,\mathrm d\uptau\right|\\
\lesssim
\frac1{N\upsigma^2} e^{C t}\langle n\rangle^3\|\upvarphi\|_{W^{2,\infty}(\T)};
\end{multline}
\item cross-correlations: for all $t\ge0$, $\upsigma,\upsigma_0\in(0,1]$, and $n\in\Z$,
\begin{multline}\label{eq:appr-crosscor}
\qquad\left|\partial_t(N\hat C_\upvarphi^N)(t,\upsigma,n;\upsigma_0)-\sum_{\upxi\in\Z} n\upxi\widehat{\mathsf{w}}_{\upbeta}(\upxi)\,\hat f_N(t,\upsigma,n-\upxi)\int_{\upsigma_0}^\upsigma \mathsf{k}_{\uplambda}(\upsigma,\uptau) (N\hat C_\upvarphi^N)(t,\uptau,\upxi;\upsigma_0)\,\mathrm d\uptau\right.\\
\left.-\sum_{\upxi\in\Z} n\upxi\widehat{\mathsf{w}}_{\upbeta}(\upxi)\,(N\hat C_\upvarphi^N)(t,\upsigma,n-\upxi;\upsigma_0)\int_0^\upsigma \mathsf{k}_{\uplambda}(\upsigma,\uptau)\hat f_N(t,\uptau,\upxi)\,\mathrm d\uptau\right.\\
\left.-\sum_{\upxi\in\Z} n\upxi\widehat{\mathsf{w}}_{\upbeta}(\upxi)\hat f_N(t,\upsigma,n-\upxi)\,\mathsf{k}_{\uplambda}(\upsigma,\upsigma_0)\hat A_\upvarphi^N(t,\upsigma_0,\upxi)\right|\\
\lesssim
\frac1{N\upsigma^2} e^{\sqrt{C t\log(\upsigma/\upsigma_0)}}e^{C t}\langle n\rangle^3\|\upvarphi\|_{W^{2,\infty}(\T)}.
\end{multline}
\end{enumerate}
Let $\ell\le j$ and $n\in\Z$ be fixed.
By the particle dynamics~\eqref{eq:gpt-dyn-alibi}, using Fourier decomposition for the interaction $\mathsf{w}_{\upbeta}'$, we can compute
\begin{eqnarray}
\frac{\mathrm d}{\mathrm dt}\Cov\left(e^{in\uptheta_j},\upvarphi\left(\uptheta_\ell^0\right)\right)
&=&in\sum_{k=1}^{j-1}\upomega_{j,k}\Cov\left(e^{in\uptheta_j}\mathsf{w}_{\upbeta}'\left(\uptheta_j-\uptheta_k\right),\,\upvarphi\left(\uptheta_\ell^0\right)\right)\nonumber\\
&=&-\sum_{\upxi\in\Z} n\upxi\widehat{\mathsf{w}}_{\upbeta}(\upxi)\sum_{k=1}^{j-1}\upomega_{j,k}\Cov\left(e^{i(n+\upxi)\uptheta_j}e^{-i\upxi\uptheta_k},\,\upvarphi\left(\uptheta_\ell^0\right)\right).\label{eq:est-dtcov-0}
\end{eqnarray}
By the law of total cumulance, we may expand, for $k\ne \ell$,
\begin{eqnarray*}
\lefteqn{\Cov\left(e^{i(n+\upxi)\uptheta_j}e^{-i\upxi\uptheta_k},\,\upvarphi\left(\uptheta_\ell^0\right)\right)}\\
&=&\E\left[e^{i(n+\upxi)\uptheta_j}e^{-i\upxi\uptheta_k}\upvarphi\left(\uptheta_\ell^0\right)\right]-\E\left[e^{i(n+\upxi)\uptheta_j}e^{-i\upxi\uptheta_k}\right]\,\E\left[\upvarphi\left(\uptheta_\ell^0\right)\right]\\
&=&\E\left[e^{i(n+\upxi)\uptheta_j}\right]\,\Cov\left(e^{-i\upxi\uptheta_k},\upvarphi\left(\uptheta_\ell^0\right)\right)
+\E\left[e^{-i\upxi\uptheta_k}\right]\,\Cov\left(e^{i(n+\upxi)\uptheta_j},\upvarphi\left(\uptheta_\ell^0\right)\right)\\
&+&\upkappa_{1,1,1}\left(e^{i(n+\upxi)\uptheta_j},e^{-i\upxi\uptheta_k},\upvarphi\left(\uptheta_\ell^0\right)\right),
\end{eqnarray*}
and thus, appealing to the cumulant estimate of Lemma~\ref{lem:correl},
\begin{multline*}
\left|\Cov\left(e^{i(n+\upxi)\uptheta_j}e^{-i\upxi\uptheta_k},\,\upvarphi\left(\uptheta_\ell^0\right)\right)\right.\\
\left.-\E\left[e^{i(n+\upxi)\uptheta_j}\right]\,\Cov\left(e^{-i\upxi\uptheta_k},\upvarphi\left(\uptheta_\ell^0\right)\right)
-\E\left[e^{-i\upxi\uptheta_k}\right]\,\Cov\left(e^{i(n+\upxi)\uptheta_j},\upvarphi\left(\uptheta_\ell^0\right)\right)\right|\\
\lesssim j^{-1}(k\vee \ell)^{-1}e^{\sqrt{C t\log(j/(k\wedge \ell))}}e^{C t}\langle\upxi\rangle^3\langle n\rangle^2\|\upvarphi\|_{W^{2,\infty}(\T)}.
\end{multline*}
Similarly, for $k=\ell$, we find
\begin{multline*}
\left|\Cov\left(e^{i(n+\upxi)\uptheta_j}e^{-i\upxi\uptheta_\ell},\,\upvarphi\left(\uptheta_\ell^0\right)\right)
-\E\left[e^{i(n+\upxi)\uptheta_j}\right]\,\Cov\left(e^{-i\upxi\uptheta_\ell},\upvarphi\left(\uptheta_\ell^0\right)\right)\right|\\
\lesssim j^{-1}e^{\sqrt{C t\log(j/\ell)}}e^{C t}\langle\upxi\rangle^2\langle n\rangle\|\upvarphi\|_{W^{1,\infty}(\T)}.
\end{multline*}
Inserting these estimates into~\eqref{eq:est-dtcov-0}, and recalling that $\Cov\left(e^{-i\upxi\uptheta_k},\upvarphi\left(\uptheta_\ell^0\right)\right)=0$ for $k<\ell$ by the triangular structure of the dynamics, we deduce for all $t\ge0$,
\begin{multline*}
\left|\frac{\mathrm d}{\mathrm dt}\Cov\left(e^{in\uptheta_j},\upvarphi\left(\uptheta_\ell^0\right)\right)
-\sum_{\upxi\in\Z} n\upxi\widehat{\mathsf{w}}_{\upbeta}(\upxi)\,\E\left[e^{i(n+\upxi)\uptheta_j}\right]\sum_{\ell<k<j}\upomega_{j,k}\Cov\left(e^{-i\upxi\uptheta_k},\upvarphi\left(\uptheta_\ell^0\right)\right)\right.\\
\left.-\sum_{\upxi\in\Z} n\upxi\widehat{\mathsf{w}}_{\upbeta}(\upxi)\,\Cov\left(e^{i(n+\upxi)\uptheta_j},\upvarphi\left(\uptheta_\ell^0\right)\right)\sum_{1\le k<j\atop k\ne \ell}\upomega_{j,k}\E\left[e^{-i\upxi\uptheta_k}\right]\right.\\
\left.-\mathds1_{\ell<j}\sum_{\upxi\in\Z} n\upxi\widehat{\mathsf{w}}_{\upbeta}(\upxi)\upomega_{j,\ell}\E\left[e^{i(n+\upxi)\uptheta_j}\right]\,\Cov\left(e^{-i\upxi\uptheta_\ell},\upvarphi\left(\uptheta_\ell^0\right)\right)\right|
\lesssim
j^{-2}e^{\sqrt{C t\log(j/\ell)}}e^{C t}\langle n\rangle^3\|\upvarphi\|_{W^{2,\infty}(\T)}.
\end{multline*}
Separately considering the cases $\ell<j$ and $\ell=j$, recalling the covariance estimate of Lemma~\ref{lem:correl}, using the graphon convergence~\eqref{eq:estim-KNk},
and recalling the definition of correlation functions $A_\upvarphi^N,C_\upvarphi^N$, cf.~\eqref{eq:def-Covn}--\eqref{eq:def-Covn-2}, and of the marginal distribution $f_N$, cf.~\eqref{eq:marg-fN},
this implies the claimed approximate equations~\eqref{eq:appr-autocor}--\eqref{eq:appr-crosscor}.

\medskip
\step2~ Error estimate for autocorrelations.\\
Using the mean-field error estimate~\eqref{eq:estim-gN-res-re} for the marginal distribution $f_N$, the approximate equation~\eqref{eq:appr-autocor} becomes, for $\upzeta\le\updelta$ with $\upzeta<1$,
\begin{equation*}
\left|\partial_t\hat A_\upvarphi^N(t,\upsigma,n)
-\sum_{\upxi\in\Z} n\upxi\widehat{\mathsf{w}}_{\upbeta}(\upxi)\,\hat A_\upvarphi^N(t,\upsigma,n-\upxi)\int_0^\upsigma \mathsf{k}_{\uplambda}(\upsigma,\uptau)\hat f(t,\uptau,\upxi)\,\mathrm d\uptau\right|
\lesssim N^{-\upzeta}\upsigma^{-2} e^{C t}\langle n\rangle^C\|\upvarphi\|_{W^{2,\infty}(\T)}.
\end{equation*}
Comparing with the solution $A_\upvarphi$ of the limit equation~\eqref{eq:Aphi-lambda}, and using symmetry in $n$ as in~\eqref{eq:estim-nxi}, we obtain for $\upalpha\gg1$,
\begin{equation*}
\partial_t\left(\sum_n\langle n\rangle^{-\upalpha}|(\hat A_\upvarphi^N-\hat A_\upvarphi)(t,\upsigma,n)|^2\right)^{\frac12}
\lesssim \left(\sum_n\langle n\rangle^{-\upalpha}|(\hat A_\upvarphi^N-\hat A_\upvarphi)(t,\upsigma,n)|^2\right)^{\frac12}
+N^{-\upzeta}\upsigma^{-2} e^{C t}\|\upvarphi\|_{W^{2,\infty}(\T)},
\end{equation*}
and thus, by Gr\"onwall's inequality,
\begin{multline}\label{eq:estim-AN-A-pre}
\left(\sum_n\langle n\rangle^{-\upalpha}|(\hat A_\upvarphi^N-\hat A_\upvarphi)(t,\upsigma,n)|^2\right)^{\frac12}
\lesssim e^{C t}\left(\sum_n\langle n\rangle^{-\upalpha}|(\hat A_\upvarphi^N-\hat A_\upvarphi)(0,\upsigma,n)|^2\right)^{\frac12}\\[-1mm]
+N^{-\upzeta}\upsigma^{-2} e^{C t}\|\upvarphi\|_{W^{2,\infty}(\T)}.
\end{multline}
By definition~\eqref{eq:def-Covn}, we can write the initial autocorrelation in terms of the marginal distribution,
\[\hat A_\upvarphi^N(0,\upsigma,n)= \int_\T e^{-in\uptheta}f_N(0,\upsigma,\uptheta)\left(\upvarphi(\uptheta)-\int_\T\upvarphi f_N(0,\upsigma,\cdot)\right) \,\mathrm d\uptheta.\]
Comparing this with the initial condition for $A_\upvarphi$ in~\eqref{eq:Aphi-lambda} and computing the Fourier coefficient, we obtain for $\upsigma\in(0,1]$,
\begin{multline*}
(\hat A_\upvarphi^N-\hat A_\upvarphi)(0,\upsigma,n)
=\sum_{\upxi\in\Z}\left(\hat\upvarphi(\upxi)\,(\hat f_N-\hat f)(0,\upsigma,n-\upxi)
-\hat\upvarphi(\upxi)\hat f_N(0,\upsigma,-\upxi)(\hat f_N-\hat f)(0,\upsigma,n)\right.\\[-3mm]
\left.-\hat\upvarphi(\upxi)\hat f(0,\upsigma,n)(\hat f_N-\hat f)(0,\upsigma,-\upxi)\right),
\end{multline*}
and thus, by the initial convergence assumption~\eqref{eq:init-conv},
\begin{equation*}
|(\hat A_\upvarphi^N-\hat A_\upvarphi)(0,\upsigma,n)|
\lesssim N^{-\updelta}\langle n\rangle^\upgamma\sum_{\upxi\in\Z}\langle \upxi\rangle^\upgamma|\hat\upvarphi(\upxi)|.
\end{equation*}
Inserting this into~\eqref{eq:estim-AN-A-pre}, the conclusion~\eqref{eq:estim-ANA} follows.

\medskip
\step3~Error estimate for cross-correlations.\\
Using the mean-field error estimate~\eqref{eq:estim-gN-res-re} for the marginal distribution $f_N$, as well as~\eqref{eq:estim-ANA} for autocorrelations, and recalling the correlation estimates of Lemma~\ref{lem:correl}, we can replace $\hat f_N,\hat A_\upvarphi^N$ by $\hat f,\hat A_\upvarphi$ in the approximate equation~\eqref{eq:appr-crosscor}: for $\upzeta\le\updelta$ with $\upzeta<1$,
\begin{multline*}
\qquad\left|\partial_t(N\hat C_\upvarphi^N)(t,\upsigma,n;\upsigma_0)
-\sum_{\upxi\in\Z} n\upxi\widehat{\mathsf{w}}_{\upbeta}(\upxi)\,\hat f(t,\upsigma,n-\upxi)\int_{\upsigma_0}^\upsigma \mathsf{k}_{\uplambda}(\upsigma,\uptau) (N\hat C_\upvarphi^N)(t,\uptau,\upxi;\upsigma_0)\,\mathrm d\uptau\right.\\
\left.-\sum_{\upxi\in\Z} n\upxi\widehat{\mathsf{w}}_{\upbeta}(\upxi)\,(N\hat C_\upvarphi^N)(t,\upsigma,n-\upxi;\upsigma_0)\int_0^\upsigma \mathsf{k}_{\uplambda}(\upsigma,\uptau)\hat f(t,\uptau,\upxi)\,\mathrm d\uptau\right.\\
\left.-\sum_{\upxi\in\Z} n\upxi\widehat{\mathsf{w}}_{\upbeta}(\upxi)\hat f(t,\upsigma,n-\upxi)\,\mathsf{k}_{\uplambda}(\upsigma,\upsigma_0)\hat A_\upvarphi(t,\upsigma_0,\upxi)\right|
\lesssim_{\upvarphi}
\frac1{N^\upzeta\upsigma^2} e^{\sqrt{C t\log(\upsigma/\upsigma_0)}}e^{C t}\langle n\rangle^C.
\end{multline*}
Comparing with the solution $C_\upvarphi$ of the limit equation~\eqref{eq:Cphi-lambda}, and using symmetry in $n$ as in~\eqref{eq:estim-nxi}, we obtain for $\upalpha\gg1$,
\begin{multline*}
\partial_t\left(\sum_{n\in\Z}\langle n\rangle^{-\upalpha}|(N\hat C_\upvarphi^N-\hat C_\upvarphi)(t,\upsigma,n;\upsigma_0)|^2\right)^{\frac12}\\[-2mm]
\lesssim
\int_{\upsigma_0}^\upsigma \mathsf{k}_{\uplambda}(\upsigma,\uptau)\left(\sum_{n\in\Z}\langle n\rangle^{-\upalpha}|(N\hat C_\upvarphi^N-\hat C_\upvarphi)(t,\uptau,n;\upsigma_0)|^2\right)^{\frac12}\,\mathrm d\uptau\\
+\left(\sum_{n\in\Z}\langle n\rangle^{-\upalpha}|(N\hat C_\upvarphi^N-\hat C_\upvarphi)(t,\upsigma,n;\upsigma_0)|^2\right)^{\frac12}
+\frac{C(\upvarphi)}{N^\upzeta\upsigma^2} e^{\sqrt{C t\log(\upsigma/\upsigma_0)}}e^{C t},
\end{multline*}
and thus, by Gr\"onwall's inequality, with $\hat C_\upvarphi^N|_{t=0}=\hat C_\upvarphi|_{t=0}=0$,
\begin{equation*}
\left(\sum_{n\in\Z}\langle n\rangle^{-\upalpha}|(N\hat C_\upvarphi^N-\hat C_\upvarphi)(t,\upsigma,n;\upsigma_0)|^2\right)^{\frac12}
\lesssim_{\upvarphi}\frac1{N^\upzeta\upsigma_0^2} e^{C t},
\end{equation*}
which concludes the proof of~\eqref{eq:estim-CNC}.
\end{proof}

\section{Lost in the middle}\label{sec:litm}

This section is devoted to the proof of
Theorem~\ref{thm:U-shape}.
We work in the iid uniform case $f\equiv1$, $A_\upvarphi\equiv\upvarphi-\int_\T\upvarphi$, with fixed $\uplambda,\upbeta>0$. By Theorem~\ref{th:lost},
we recall that the soft accuracy admits the expansion~\eqref{eq:soft-accuracy-expansion}, so that it remains to study the profile of the leading correction $\mathscr{S}_t(\upsigma_0)$ defined in~\eqref{eq:Sb-def}.
The argument relies on the explicit solvability of the Volterra--Hardy equation~\eqref{eq:Hardy-hom}, as stated in Proposition~\ref{prop:Bessel}.
For this, we reduce the equation to a Goursat problem and solve it explicitly in terms of modified Bessel functions.

\begin{proof}[Proof of Proposition~\ref{prop:Bessel}]
In terms of
\[
F(t,\upsigma):=e^{\uplambda\upsigma_0}+\int_{\upsigma_0}^{\upsigma} e^{\uplambda\upsigma'}g_a(t,\upsigma';\upsigma_0)\,\mathrm d\upsigma',
\]
the Volterra--Hardy equation~\eqref{eq:Hardy-hom} for $g_a$ becomes
\[
\partial_t g_a(t,\upsigma;\upsigma_0)
-a\,\frac{\uplambda e^{-\uplambda \upsigma}}{1-e^{-\uplambda \upsigma}}\,F(t,\upsigma)=0.
\]
Since $g_a=e^{-\uplambda \upsigma}\partial_\upsigma F$, we arrive at
\[
\partial_t\partial_\upsigma F(t,\upsigma)-a\,\frac{\uplambda}{1-e^{-\uplambda \upsigma}}\,F(t,\upsigma)=0,
\qquad
F(t,\upsigma_0)=e^{\uplambda\upsigma_0},
\qquad
F(0,\upsigma)=e^{\uplambda\upsigma_0}.
\]
Now introduce
\[
y:=Y(\upsigma;\upsigma_0)=\int_{\upsigma_0}^{\upsigma} \frac{\uplambda\,\mathrm d\uprho}{1-e^{-\uplambda\uprho}}
=
\log \frac{e^{\uplambda \upsigma}-1}{e^{\uplambda\upsigma_0}-1},
\]
and set $U(t,y):=F(t,\upsigma(y))$. Since
\[
\partial_\upsigma y = \frac{\uplambda}{1-e^{-\uplambda \upsigma}},
\qquad
\partial_\upsigma F(t,\upsigma)=\frac{\uplambda}{1-e^{-\uplambda \upsigma}}\partial_yU(t,y),
\]
we find
\[
\partial_t\partial_\upsigma F(t,\upsigma)
=
\frac{\uplambda}{1-e^{-\uplambda \upsigma}}\partial_t\partial_yU(t,y).
\]
Hence the equation for $F$ reduces to the Goursat problem
\[
\partial_t\partial_yU(t,y)-a\,U(t,y)=0,
\qquad
U(t,0)=e^{\uplambda\upsigma_0},
\qquad
U(0,y)=e^{\uplambda\upsigma_0}.
\]
We solve this by Laplace transform in $t$. Writing
\[
\widetilde U(\upmu,y):=\int_0^\infty e^{-\upmu t}U(t,y)\,\mathrm d t,
\]
the transformed equation reads
\[
\upmu\,\partial_y\widetilde U(\upmu,y)-a\,\widetilde U(\upmu,y)=0,
\qquad
\widetilde U(\upmu,0)=\frac{e^{\uplambda\upsigma_0}}{\upmu}.
\]
Therefore
\[
\widetilde U(\upmu,y)=\frac{e^{\uplambda\upsigma_0}}{\upmu}e^{ay/\upmu}
=
e^{\uplambda\upsigma_0}\sum_{k\ge0}\frac{a^k y^k}{k!}\upmu^{-(k+1)}.
\]
Inverting termwise yields
\[
U(t,y)
=
e^{\uplambda\upsigma_0}\sum_{k\ge0}\frac{(aty)^k}{(k!)^2}
=
e^{\uplambda\upsigma_0}I_0 \left(2\sqrt{aty}\right).
\]
Differentiating and using $I_0'(z)=I_1(z)$ gives
\[
\partial_yU(t,y)
=
e^{\uplambda\upsigma_0}\sqrt{\frac{at}{y}}\,I_1 \left(2\sqrt{aty}\right).
\]
Returning to $F$ and then to $g_a$,
\[
g_a(t,\upsigma;\upsigma_0)
=
e^{-\uplambda \upsigma}\partial_\upsigma F(t,\upsigma)
=
e^{-\uplambda \upsigma}\frac{\uplambda}{1-e^{-\uplambda \upsigma}}\partial_yU(t,Y(\upsigma;\upsigma_0)),
\]
which is exactly \eqref{eq:Bessel-full}. The formula
\eqref{eq:Bessel-output} follows from the limit $\uplambda\downarrow0$, because
$Y(\upsigma;\upsigma_0)\to\log(\upsigma/\upsigma_0)$ and
$\frac{\uplambda e^{-\uplambda(\upsigma-\upsigma_0)}}{1-e^{-\uplambda \upsigma}}\to \upsigma^{-1}$.
\end{proof}

With Proposition~\ref{prop:Bessel} at hand, the expression for $\mathscr{S}_t(\upsigma_0)$ in~\eqref{eq:Sb-def} becomes explicit, and a direct analysis will prove its $\mathsf{U}$-shape.
Set
\[
c_n:=a_nt,
\qquad
Y(\upsigma_0):=Y(1;\upsigma_0)=\log \frac{e^\uplambda-1}{e^{\uplambda\upsigma_0}-1},
\]
and define
\[
\uppsi_c(y):=\sqrt{\frac{c}{y}}\,I_1(2\sqrt{cy})
=\sum_{k\ge0}\frac{c^{k+1}}{k!(k+1)!}y^k.
\]
At the output layer $\upsigma=1$, Proposition~\ref{prop:Bessel} yields
\[
g_{a_n}(t,1;\upsigma_0)
=
\left(\frac{\uplambda}{e^\uplambda-1}+\uplambda e^{-Y(\upsigma_0)}\right)\uppsi_{c_n}(Y(\upsigma_0)).
\]

\begin{proof}[Proof of Theorem~\ref{thm:U-shape}]
The map $\upsigma_0\mapsto Y(\upsigma_0)$ is a smooth decreasing bijection from $(0,1]$
onto $[0,\infty)$, so it suffices to consider
\[
\mathscr{H}(y):=
\sum_{n\ge1}e^{-\frac{\uppi^2}{2M^2}n^2}
\left(\frac{\uplambda}{e^\uplambda-1}+\uplambda e^{-y}\right)\uppsi_{c_n}(y),
\qquad y\in[0,\infty).
\]
For one mode, write
\[
h_c(y):=
\left(\frac{\uplambda}{e^\uplambda-1}+\uplambda e^{-y}\right)\uppsi_c(y).
\]
A direct differentiation yields
\[
h_c''(y)
=
\frac{\uplambda}{e^\uplambda-1}\uppsi_c''(y)
+
\uplambda e^{-y}\left(\uppsi_c''(y)-2\uppsi_c'(y)+\uppsi_c(y)\right).
\]
Since
\[
\uppsi_c(y)=\sum_{k\ge0}\frac{c^{k+1}}{k!(k+1)!}y^k,
\]
we have
\[
\uppsi_c''(y)-2\uppsi_c'(y)+\uppsi_c(y)
=
\sum_{k\ge0}
\frac{c^{k+1}}{k!(k+3)!}
\left(c^2-2(k+3)c+(k+2)(k+3)\right)y^k.
\]
If $0<c\le 3-\sqrt3$, then every coefficient in the last series is
nonnegative, hence $h_c''(y)>0$ on $[0,\infty)$. Under
\eqref{eq:affine-smallness}, each summand is therefore strictly convex, so
$\mathscr{H}$ is strictly convex.
At $y=0$ we have $\uppsi_c(0)=c$ and $\uppsi_c'(0)=c^2/2$, hence
\[
h_c'(0)
=
\uplambda c\left(\frac{c}{2(1-e^{-\uplambda})}-1\right).
\]
The second bound in \eqref{eq:affine-smallness} implies $h_c'(0)<0$
for every mode, hence $\mathscr{H}'(0)<0$. Finally, because
$I_1(z)\sim e^z/\sqrt{2\uppi z}$ as $z\to\infty$, each $h_c(y)$ tends
to $\infty$ as $y\to\infty$, and so does $\mathscr{H}(y)$. A strictly convex
function on $[0,\infty)$ with negative initial slope and diverging right tail
has a unique global minimizer. Pulling it back through the bijection
$\upsigma_0\mapsto Y(\upsigma_0)$ proves the claim.
\end{proof}

\subsection*{Acknowledgments}

The authors thank Thierry Paul for organizing several inspiring ``Round Meanfield'' workshops over the past years, as well as Emmanuel Tr\'elat and Pierre Le Bris for motivating discussions, which resulted in the genesis of this work.

MD acknowledges financial support from the European Union (ERC, PASTIS, Grant Agreement n$^\circ$101075879).\footnote{{Views and opinions expressed are however those of the authors only and do not necessarily reflect those of the European Union or the European Research Council Executive Agency. Neither the European Union nor the granting authority can be held responsible for them.}} BG's research was supported by a Sorbonne Emergences grant and a gift from Google.

\subsection*{AI tool disclosure}

ChatGPT was used for proofreading, generating the figures in the text and gave the main clues behind the explicit computation of Proposition~\ref{prop:Bessel}. Outside of these AI tool uses, the text
of this paper was human generated.


\bibliographystyle{plain}
\bibliography{biblio}

\bigskip
\bigskip

\begin{minipage}[t]{.5\textwidth}
    {\footnotesize{\bf Mitia Duerinckx}\par
      D\'epartement de Math\'emathiques\par
Universit\'e Libre de Bruxelles\par
Boulevard du Triomphe\par
B-1050 Bruxelles, Belgium
     \par
      e-mail: \href{mailto:bruno.despres@inria.fr}{\textcolor{blue}{\scriptsize mitia.duerinckx@ulb.be}}
      }
    \end{minipage}
    \begin{minipage}[t]{.5\textwidth}
      {\footnotesize{\bf Borjan Geshkovski}\par
      Laboratoire Jacques-Louis Lions\par
      Inria \& Sorbonne Universite\par
      4 Place Jussieu\par
      75005 Paris, France\par
     \par
      e-mail: \href{mailto:borjan.geshkovski@inria.fr}{\textcolor{blue}{\scriptsize borjan.geshkovski@inria.fr}}
      }
    \end{minipage}%

    \bigskip
    \bigskip

    \begin{center}
        \begin{minipage}[t]{.5\textwidth}
    {\footnotesize{\bf Stefano Rossi}\par
      Dipartimento di Matematica Guido Castelnuovo\par
Sapienza Universit\`a di Roma\par
Piazzale Aldo Moro 5\par
00185 Rome, Italy
     \par
      e-mail: \href{mailto:stefano.rossi2@uniroma1.it}{\textcolor{blue}{\scriptsize stefano.rossi2@uniroma1.it}}
      }
    \end{minipage}%
    \end{center}

\end{document}